\theoremstyle{plain}
\newtheorem{thm}{Theorem}[section]
\newtheorem{lem}[thm]{Lemma}
\newtheorem{prop}[thm]{Proposition}
\newtheorem{defn}[thm]{Definition}
\newtheorem{rem}[thm]{Remark}
\numberwithin{equation}{section}
\def\ff{u}
\def\kk{\Phi}
\def\bb{\beta}
\def\sss{\varphi}
\def\mm{\boldsymbol\mu}
\def\tf{\theta}
\begin{document} 
	\begin{center}
		\Large{\textbf{Mass-conserving weak solutions to the continuous  nonlinear fragmentation equation in the presence of mass transfer}}
	\end{center}

	\medskip
\medskip
\centerline{${\text{ ${\text{Ram Gopal~ Jaiswal$^{\dagger}$}}$ and  Ankik Kumar Giri$^{\dagger*}$}}$}\let\thefootnote\relax\footnotetext{$^{*}$Corresponding author. Tel +91-1332-284818 (O);  Fax: +91-1332-273560  \newline{\it{${}$ \hspace{.3cm} Email address: }}ankik.giri@ma.iitr.ac.in}
\medskip
{\footnotesize

	\centerline{ ${}^{}$  $\dagger$ Department of Mathematics, Indian Institute of Technology Roorkee,}
	\centerline{Roorkee-247667, Uttarakhand, India}
	
}
	\bigskip
	
	\begin{quote}
	{\small {\em \bf Abstract.} 
	A mathematical model for the continuous nonlinear fragmentation equation is considered in the presence of mass transfer. In this paper, we demonstrate the existence of mass-conserving weak solutions to the nonlinear fragmentation equation with mass transfer for collision kernels of the form $\kk(x,y) = \kappa(x^{{\sigma_1}} y^{{\sigma_2}} + y^{{\sigma_1}} x^{{\sigma_2}})$, $\kappa>0$, $0 \leq {\sigma_1} \leq {\sigma_2} \leq 1$, and ${\sigma_1} \neq 1$ for $(x, y) \in \mathbb{R}_+^2$, with integrable daughter distribution functions, thereby extending previous results obtained by Giri \& Lauren\c cot (2021). In particular, the existence of at least one global weak solution is shown when the collision kernel exhibits at least linear growth, and one local weak solution when the collision kernel exhibits sublinear growth. In both cases, finite superlinear moment bounds are obtained for positive times without requiring the finiteness of initial superlinear moments. Additionally, the uniqueness of solutions is confirmed in both cases.
}
	\end{quote}
	
	\vspace{0.5cm}
	
	\textbf{Keywords.} Nonlinear fragmentation, collision-induced breakage, existence, conservation of mass, uniqueness\\
	
	\textbf{AMS subject classifications.} 45K05, 35F20, 35R09

		\section{\textbf{Introduction}}\label{sec:intro}
The continuous nonlinear fragmentation equation with mass transfer was first introduced in astrophysics \cite{safronov1972} and atmospheric science \cite{list1976} to study the evolution of droplet size distributions in clouds. The nonlinear fragmentation equations,  commonly referred to as the collision-induced breakage equation, characterize the behavior of a particle system in which particles break apart due to binary collisions, resulting in the creation of daughter particles with possible transfer of mass. Let $\ff(t,x)$ denote the density of particles of size $x \in \mathbb{R}_+:=(0,\infty)$ at time $t \geq 0$. The time evolution of $\ff$ is described by the following nonlinear integro-partial differential equation
	\begin{align}
		\partial_{t} \ff(t,x) =&\frac{1}{2} \int_{x}^{\infty} \int_{0}^{y} \bb(x,y-z,z) \kk(y-z,z) \ff(t,y-z) \ff(t,z) \, dz \, dy\nonumber \\
	&	- \int_{0}^{\infty} \kk(x,y) \ff(t,x) \ff(t,y) \, dy, \quad (t,x) \in \mathbb{R}_+^2, \label{main:eq}
	\end{align}
	with initial condition
	\begin{equation}
		\ff(0,x) = \ff^{\mathrm{in}}(x), \quad x \in \mathbb{R}_+, \label{in:eq}
	\end{equation}
	where the collision kernel $\kk(x,y) = \kk(y,x) \geq 0$ represents the rate of collision between two particles of size $x$ and $y$, while the daughter distribution function (also referred to as the breakage kernel) $\bb(z,x,y) = \bb(z,y,x) \geq 0$ represents the rate at which particles of size $z$ are generated from a collision event between two particles of size $x$ and $y$. The first term on the right-hand side of equation \eqref{main:eq} accounts for the generation of particles of size $x$ due to collisions between two particles of size $y-z$ and $z$, while the second term represents the reduction in the number of particles of size $x$ as a result of their collisions with particles of any size.
	
	Assuming that mass is conserved locally, we impose the following condition
	\begin{equation}
		\int_{0}^{x+y} z \bb(z,x,y) \, dz = x + y \quad \text{and} \quad \bb(z,x,y) = 0 \, ~\text{for} ~\, z > x + y, \label{localc:eq}
	\end{equation}
	for all $(x,y) \in \mathbb{R}_+^2$. This condition leads to the global mass conservation principle, stated as
	\begin{equation}
		\int_{0}^{\infty} x \ff(t,x) \, dx = \int_{0}^{\infty} x \ff^{\text{in}}(x) \, dx, \quad t \geq 0. \label{globalc:eq}
	\end{equation}
Additionally, we assume that the breakage kernel $\bb$ is integrable, as this guarantees that the total number of daughter particles formed due to the collision of particles of sizes $x$ and $y$ remains finite. Mathematically, this condition is expressed {as in \cite{ali2024, GL12021, GL22021}} for a constant $\gamma\ge 2$
\begin{equation}
2 \le	\mathrm{N}(x,y) := \int_0^{x+y} \bb(z,x,y) \, dz \le \gamma, \quad (x,y) \in \mathbb{R}_+^2.\label{nop_assump:eq}
\end{equation}
Furthermore, from equation \eqref{localc:eq}, we deduce that the collision between particles of sizes $x$ and $y$ cannot create particles larger than their combined size $x + y$. However, it is possible that a collision between particles of sizes $x$ and $y$ might generate a fragment larger than $\max\{x,y\}$ due to mass transfer from the smaller to the larger particle. For instance, the collision between particles of size $x$ and $y$ could produce one fragment of size $(2y + x)/2$ and another fragment of size $x/2$. However, if there exists a function $\bar{\bb}\ge 0$ such that
\begin{equation}
	\bb(z,x,y) = \bar{\bb}(z,x,y) \mathbf{1}_{(0,x)}(z) + \bar{\bb}(z,y,x) \mathbf{1}_{(0,y)}(z), \label{masstransfer:eq}
\end{equation}
and
\begin{equation}
	\int_{0}^{x} z \bar{\bb}(z,x,y) \, dz = x, \quad \text{and} \quad \bar{\bb}(z,x,y)=0 ~~ \text{for} ~~z>x, \label{Blocalc:eq}
\end{equation}

for all $(x,y,z) \in \mathbb{R}_+^3$, then no mass transfer will occur during the collision$ $. Specifically, consider two particles with respective sizes $x$ and $y$ that collide with collision rate  $\kk(x,y)$. Upon collision, the particle of size $x$ produces fragments of size $z<x$ according to the distribution function $\bar{\bb}(z,x,y)$, while the particle of size $y$ produces fragments of size $z<y$ according to the distribution function $\bar{\bb}(z,y,x)$. In this scenario, the transfer of mass from a particle of size $x$ to a particle of size	$y$ would be absent, as the particle sizes after the collision cannot exceed their original sizes $x$ and $y$.  This leads to the nonlinear fragmentation equation without mass transfer \cite{cheng1988,cheng1990,ernst2007, krapivsky2003,kostoglou2000,kostoglou2006}, given by
	\begin{align}
		\partial_{t} \ff (t,x) = &\int_{0}^{\infty}\int_{x}^{\infty}\bar{\bb} (x,y,z)\kk(y,z)\ff (t,y)\ff (t,z) \,d{y} \,d{z} \nonumber \\
		&- \int_{0}^{\infty}\kk(x,y)\ff (t,x)\ff(t,y) \,d{y}, \qquad (t,x) \in \mathbb{R}_+^2.
		\label{main without mass transfer}
	\end{align}
	The nonlinear fragmentation equation without mass transfer \eqref{main without mass transfer} can be derived from \eqref{main:eq} by applying the conditions \eqref{masstransfer:eq} and \eqref{Blocalc:eq}.

\medskip
The linear fragmentation equation, initially studied by Filippov \cite{Fil1961}, Kapur \cite{kapur1972}, and McGrady and Ziff \cite{ziff1987, ziff1991}, has garnered significant attention over the last few decades. This equation has been further analyzed using deterministic approaches in \cite{banasiak2002, banasiak2004, banasiak2006, bll2019, bt2018, eme2005}, and through probabilistic methods in \cite{bert2002, haas2003}. Comprehensive accounts of this topic can be found in the books \cite{bll2019, bert2006}.

In comparison to linear fragmentation, the nonlinear fragmentation equation \eqref{main:eq} has received relatively less attention. However, recent studies have established several mathematical results for the nonlinear fragmentation equation without mass transfer \eqref{main without mass transfer}, as presented in \cite{GL12021, GL22021, GJL2024, JA2024}. The nonlinear fragmentation equation without mass transfer \eqref{main without mass transfer} has been widely explored in the physics literature, with particular emphasis on scaling properties and the shattering transition, where mass is lost due to the formation of ``particles of zero size" or dust \cite{cheng1988, cheng1990, ernst2007, kostoglou2000, kostoglou2006, krapivsky2003}. Cheng and Redner \cite{cheng1988, cheng1990} conducted an asymptotic analysis of nonlinear fragmentation events in models where a collision between two particles leads to either both particles splitting into equal halves or one of the particles (larger or smaller) dividing into two. They focused on the self-similar solutions for the evolution $\ff$ at sufficiently large times, expressed as
\begin{align}
	\ff(t,x) & \sim \frac{1}{e(t)^2} \sss\left(\frac{x}{e(t)}\right), \label{self-similar form}
\end{align}
where the dynamics are governed by the size parameter $e(t)$ of particles at time $t$ and a self-similar profile $\sss$. Later, the evolution of nonlinear fragmentation was analyzed in \cite{krapivsky2003} by exploring its traveling wave behavior to obtain closed-form solutions for the mass distribution. Kostoglou and Karabelas \cite{kostoglou2000, kostoglou2006} transformed \eqref{main without mass transfer} into a linear fragmentation equation to derive analytical solutions for specific collision kernels. For the constant collision kernel $\kk(x,y) = 1$, they obtained solutions defined for finite time, while for the classical multiplicative kernel $\kk(x,y) = xy$, the solutions were defined for all time. Furthermore, they analyzed the self-similar profile of \eqref{main without mass transfer} for the multiplicative collision kernel $\kk(x,y) = (xy)^{\sigma/2}$ and the sum collision kernel $\kk(x,y) = x^\sigma + y^\sigma$ for $\sigma > 1$, and for the distribution function $ \bar{\bb} ({z},{x},{y})=(\nu+2){{z}}^\nu {{x}}^{-\nu-1}\textbf{1}_{(0,{x})}({z}),$ with $\nu\in (-2,0]$. In addition, the asymptotic behavior of solutions for the nonlinear fragmentation equation \eqref{main without mass transfer} studied in \cite{ernst2007} by transferring it to the linear fragmentation equation with a different time scale for a product kernel $\kk({x},{y}) = ({x}{y})^{\sigma/2}$, where $0\le \sigma \le 2$ and study the occurrence of shattering phenomenon. In \cite{GL22021}, the well-posedness of the continuous nonlinear fragmentation equation without mass transfer \eqref{main without mass transfer} is discussed for integrable breakage kernels and collision kernels of the form {$\kk(x,y) = \kappa(x^{{\sigma_1}} y^{{\sigma_2}} + x^{{\sigma_2}} y^{{\sigma_1}})$, where $\kappa>0$ and $\sigma := {\sigma_1} + {\sigma_2} \in [0,2]$.} It is also shown that no mass-conserving weak solutions exist for $\sigma_1<0$, even for short times. Later, the continuous nonlinear fragmentation equation without mass transfer \eqref{main without mass transfer} with non-integrable breakage kernels has been considered in \cite{GJL2024} which extends all the results established in \cite{GL12021} to the non-integrable breakage kernels. Specifically, for a fixed $m_0\in(0,1)$, the global existence of mass-conserving weak solutions is proven for a class of collision kernels given by
	\begin{align*}
		\kk({x},{y})={x}^{\sigma_1} {y}^{\sigma_2}+{x}^{\sigma_2} {y}^{\sigma_1}, \quad  m_0\le {\sigma_1} \le {\sigma_2} \le 1, \quad  ({x},{y})\in \mathbb{R}_+^2,   \label{kernel nonintegrable}
	\end{align*}
	for $1 \leq \sigma := \sigma_1 + \sigma_2 \leq 2$. In addition, for $2m_0 \leq \sigma < 1$, it is proven that a mass-conserving weak solution exists, but this solution only persists for a finite time interval. Furthermore, the non-existence of mass-conserving weak solutions, even over a small time interval, is proven for $\sigma_1<m_0$. The existence of a mass-conserving self-similar profile of the form \eqref{self-similar form} for the nonlinear fragmentation equation without mass transfer has been discussed recently in \cite{JA2024} for a specific class of homogeneous collision kernels $\kk$ of the form $\kk({x},{y})={x}^{\sigma_1} {y}^{\sigma_2}+{x}^{\sigma_2} {y}^{\sigma_1},$ where $m_0 \le {\sigma_1} \le {\sigma_2} \le 1$ for a fixed $m_0\in[0,1)$ and $\sigma \in (1,2],$ and homogeneous breakage kernels $\bb$ that satisfy
\begin{align}
	\bb({z},{x},{y})=\frac{1}{{x}}{\widetilde{\bb}}\bigg(\frac{{z}}{{x}}\bigg) \textbf{1}_{(0,{x})}({z})+\frac{1}{{y}}{\widetilde{\bb}}\bigg(\frac{{z}}{{y}}\bigg) \textbf{1}_{(0,{y})}({z}), \qquad {({x},{y},z)\in\mathbb{R}_{+}^2},
\end{align}

subject to the condition

\begin{align}
	\int_0^1 z_*{\widetilde{\bb}}(z_*)dz_*=1. \label{eta_local_conservation}
\end{align}

For the nonlinear fragmentation equation with mass transfer \eqref{main:eq}, which was initially introduced in \cite{safronov1972, list1976}, analytical progress has been limited. In particular, only one analytical solution has been identified in the literature for a constant collision kernel and a specific form of the breakage kernel \cite{FTL1988}. The first rigorous mathematical investigation of the {continuous} nonlinear fragmentation equation with mass transfer was carried out in \cite[Theorem 2.5]{GL12021}, where the existence and uniqueness of global mass-conserving solutions were established for collision kernels satisfying
\begin{align}
	0\le \kk(x,y)\le\kappa(x+y), \qquad \kappa>0 \quad \text{and} \quad (x,y)\in \mathbb{R}_+^2,
\end{align}
with integrable breakage kernels.  Furthermore, a high-order discontinuous Galerkin technique has been used to efficiently solve the nonlinear fragmentation equation with mass transfer in \cite{lombart2024}. {A rigorous mathematical study on the discrete version of the nonlinear fragmentation equation in the presence of mass transfer is recently done in \cite{ali2024} where the existence and uniqueness of weak solutions are established. Moreover,  in \cite{ali2024}, the existence of stationary solutions is shown by a dynamical approach. In the present work, we mainly focus on showing the existence and uniqueness of weak solutions to the continuous nonlinear fragmentation equations with mass transfer which is motivated from \cite{ali2024, GL12021, GL22021}. This extends the results obtained in \cite[Theorem 2.5]{GL12021} by focusing on a broader class of collision kernels and integrable daughter distribution functions.} In particular, we demonstrate that global weak solutions exist for collision kernels with at least linear growth, while only local weak solutions are found when the collision kernels exhibit sublinear growth. Following the methodology in \cite{GL22021}, we apply the weak $L^1$-compactness technique introduced by Stewart in \cite{stewart1989} for coagulation-fragmentation
equations; for detailed discussions and additional references on this technique,
see~\cite{bll2019}. The proof of uniqueness follows the same approach as the uniqueness results provided in \cite[Proposition 1.6]{GL22021} for \eqref{main:eq} without mass transfer.

\medskip
Specifically, we focus our analysis on the collision kernel define by
	\begin{align}
	\kk(x, y) = \kappa (x^{{\sigma_1}} y^{{\sigma_2}} + y^{{\sigma_1}} x^{{\sigma_2}}), \qquad \text{for } (x, y) \in \mathbb{R}_+^2, \label{kernel:eq}
\end{align}
where $ 0 \le {\sigma_1} \le {\sigma_2} \le 1 $, $ {\sigma_1} \neq 1 $, $ \kappa > 0 $, and $ \sigma := {\sigma_1} + {\sigma_2} \in [0,2) $. For the breakage kernel,  $\bb$, we assume that there exists a function $ \eta$ from $\mathbb{R}_+$ to $\mathbb{R}_+$ which is non-decreasing and a parameter $ \alpha \in (0, 1) $ such that
\begin{align}
	\lim_{\delta \to 0} \eta(\delta) = 0, \label{ui_1_assump:eq}
\end{align}
and
\begin{align}
	\int_0^{x + y} \mathbf{1}_{E}(z) \bb(z, x, y) \, dz \le \eta(|E|) (x^{-\alpha} + y^{-\alpha}), \quad (x, y) \in \mathbb{R}_+^2, \label{ui_2_assump:eq}
\end{align}
for any measurable set $ E \subset \mathbb{R}_+ $ with $|E|<\infty$, where $|E|$ denotes the Lebesgue measure of $E$. In addition, we assume that, for each $ m > 1 $, there exist constants $ \varkappa_m \in (0,1) $ and $ \varsigma_m \ge 1 $ such that
\begin{align}
	\int_0^{x + y} z^m \bb(z, x, y) \, dz \le (1 - \varkappa_m) (x^m + y^m) + \varsigma_m (x y^{m-1} + y x^{m-1}), \label{mth nop_assump:eq}
\end{align}
for all $ (x, y) \in \mathbb{R}_+^2 $. {The assumption \eqref{mth nop_assump:eq} on $\bb$ is analogous to the condition imposed on the daughter distribution function in the discrete nonlinear fragmentation equation studied in \cite{ali2024} to obtain higher moment estimates.}
\begin{rem}
	Hypothesis \eqref{kernel:eq} is introduced to simplify the presentation. However, it is likely that the analysis provided below also true for a collision kernel $\kk = \kk_1 + \cdots + \kk_n$, where each of the functions $\kk_n$ for $i \in \{1,\cdots,n\}$, satisfy the assumption \eqref{kernel:eq} with same homogeneity $\sigma$. In addition, the analysis performed below equally applies to collision
	kernels being bounded from above and below by multiples of the collision kernel defined by \eqref{kernel:eq}. In particular, our results are also true for the collision kernel occur in an infinite system of gravitationally attracting, randomly distributed particles with a Maxwellian velocity distribution \cite{van1987,ziff1980}
	\begin{align*}
		\kk(x,y)= (xy)^{1/2}(x+y)^{1/2}(x^{1/3}+y^{1/3}), \qquad (x,y)\in \mathbb{R}_+^2,
	\end{align*}
	which can be written as
	\begin{align*}
	\frac{1}{\sqrt{2}}(\kk_1(x,y)+\kk_2(x,y))\le \kk(x,y) \le \kk_1(x,y)+\kk_2(x,y), \qquad (x,y)\in \mathbb{R}_+^2,
	\end{align*}
	where $\kk_1(x,y)= {x}^{1/2} {y}^{4/3}+{x}^{4/3} {y}^{1/2}$ and $\kk_2(x,y)= {x}^{5/6} {y}+{x} {y}^{5/6}$ satisfy the assumption \eqref{kernel:eq}.
\end{rem}
	\begin{rem}\label{rem:ex}
	Consider a breakage kernel defined by power law
	\begin{align}
		\bb_\nu(z, x, y) = (\nu + 2) \frac{z^\nu}{(x + y)^{\nu + 1}} \mathbf{1}_{(0, x + y)}(z), \qquad (x, y, z) \in \mathbb{R}_+^3, \label{example:eq}
	\end{align}
	where $\nu\in (-2,0]$, see \cite{FTL1988, vigil2006, GL12021}.
	The function $ \bb_\nu $ satisfies assumptions \eqref{localc:eq}, \eqref{nop_assump:eq}, \eqref{ui_1_assump:eq}, \eqref{ui_2_assump:eq} and \eqref{mth nop_assump:eq} for $ \nu \in (-1, 0] $ with
	\begin{align*}
		\eta(\delta) &= (\nu + 2) \left( \frac{p - 1}{p (\nu + 1) - 1} \right)^{(p - 1)/p} \delta^{1/p}, \quad \alpha = \frac{1}{p}, \quad 	\gamma = \frac{\nu + 2}{\nu + 1},
	\end{align*}
	provided that the parameter $ p > \frac{1}{\nu + 1} $ is sufficiently large. {In addition, for $m>1$ and $ \nu \in (-1, 0] $, it follows from \cite[Lemma 7.4.2.]{bll2019} that for $ C_m := 2^{m-1} - 1 $ for $ m \in (1,2] \cup [3, \infty) $ and $ C_m := m $ for $ m \in (2,3) $,
   \begin{align*}
   \int_0^{x+y}z^m\bb_\nu(z,x,y)dz&=\int_0^{x+y}z^m(\nu + 2) \frac{z^\nu}{(x + y)^{\nu + 1}}dz=\frac{\nu + 2}{m + \nu + 1}(x+y)^m\\
   &\le \frac{\nu+2}{m+\nu+1}(x^m+y^m)+\frac{C_m (\nu + 2)}{m + \nu + 1}(xy^{m-1}+x^{m-1}y),
   \end{align*}
   which implies that $\bb_\nu$ satisfies \eqref{mth nop_assump:eq} with $\varkappa_m = \frac{m - 1}{m + \nu + 1}$ and $\varsigma_m = \frac{C_m (\nu + 2)}{m + \nu + 1}.$}
	
\end{rem}
We now introduce the functional framework utilized in this paper. For a measurable function $W\ge0$ defined on $\mathbb{R}_+$, we denote $\Xi_W$ as $L^1(\mathbb{R}_+, W(x) \, dx)$, and the associated norm and moment are given by
\begin{align*}
	\| g \|_{W} & := \int_{0}^{\infty} |g(x)| \, W(x) \, dx, \\
	\mm_{W}(g) & := \int_{0}^{\infty} g(x) \, W(x) \, dx, \quad g \in \Xi_{W}.
\end{align*}
The space $\Xi_W$ equipped with the weak topology is denoted by $\Xi_{W, w}$, and its positive cone is represented as  $\Xi_{W, +}$. Specifically, when $W(x) = W_m(x) := x^m$ for some $m \in \mathbb{R} $, we denote $ \Xi_m := \Xi_{W_m} $ and define the norms and moments as
\begin{align*}
	\| g \|_{m} & := \int_{0}^{\infty} x^m |g(x)| \, dx, \\
	\mm_{m}(g) & := \mm_{W_m}(g) := \int_{0}^{\infty} x^m g(x) \, dx, \quad g \in \Xi_{m}.
\end{align*}

Now, we outline the content of the paper. \Cref{sec:main} includes the definition of a weak solution {to \eqref{main:eq}-\eqref{in:eq} and }presents the main results of the paper. \Cref{sec:existence} contains the proof of the existence of weak solutions for two cases: (1) for a class of restricted integrable breakage kernels with $\ff^{\mathrm{in}}\in \Xi_0 \cap \Xi_{1,+}$ (\Cref{thm:existence E=0}), and (2) for integrable breakage kernels with a restricted set of initial data, i.e., $\ff^{\mathrm{in}}\in \Xi_{-\alpha} \cap \Xi_{1,+}$ for $\alpha \in (0,1)$ (\Cref{thm:existence}), using a weak $L^{1}$ compactness approach. Initially, we truncate equations \eqref{main:eq}-\eqref{in:eq} using a truncated collision kernel and demonstrate that this formulation is well-posed by applying the Banach fixed-point theorem. Consequently, we obtain a sequence of solutions for which we establish sublinear moments, emphasizing the significant role of the lower bound on the ${\sigma_1}$-moment in estimating the superlinear moments, which are shown to be finite for all positive times. Subsequently, we apply the Dunford-Pettis theorem to confirm that the sequence of solutions is weakly compact with respect to size, supported by a uniform integrability estimate. In the next step, we evaluate time equicontinuity to achieve compactness concerning time. 



	\section{Main results}\label{sec:main}
	To present the main results of the paper, we first define the concept of a weak solution for \eqref{main:eq}-\eqref{in:eq}.
	
	\begin{defn}[{Weak solutions}]\label{defn:weaksolution}
		Consider the breakage kernel $\bb$ satisfying \eqref{localc:eq} and \eqref{nop_assump:eq}. Let the initial data $ \ff^{\mathrm{in}} \in \Xi_{0,+} \cap \Xi_{1} $ and $ T \in (0, \infty] $. A non-negative function $ \ff $ is said to be a weak solution to \eqref{main:eq}-\eqref{in:eq} on $ [0, T) $ if
		\begin{equation}
			\ff \in \mathcal{C}([0, T), \Xi_{0, w}) \cap L^{\infty}((0, T), \Xi_{1}), \label{wf1:eq}
		\end{equation}
		with $ \ff(0) = \ff^{\mathrm{in}} $ on $ \mathbb{R}_+ $,
		\begin{equation}
			(\tau, x, y) \mapsto \kk(x, y) \ff(\tau, x) \ff(\tau, y) \in L^{1}((0, t) \times \mathbb{R}_+^2), \label{wf2:eq}
		\end{equation}
		and
		\begin{align}
			\int_{0}^{\infty} \tf(x) \ff(t, x) \, dx &= \int_{0}^{\infty} \tf(x) \ff^{\mathrm{in}}(x) \, dx \nonumber \\
			&+ \frac{1}{2} \int_{0}^{t} \int_{0}^{\infty} \int_{0}^{\infty} \Upsilon_{\tf}(x, y) \kk(x, y) \ff(\tau, x) \ff(\tau, y) \, dy \, dx \, d\tau \label{wf3:eq}
		\end{align}
		for all $ t \in (0, T) $ and $ \tf \in L^{\infty}(\mathbb{R}_+) $, where
		\begin{equation}
			\Upsilon_{\tf}(x, y) := \int_{0}^{x + y} \tf(z) \bb(z, x, y) \, dz - \tf(x) - \tf(y), \quad (x,y)\in {\mathbb{R}_+^2.} \label{upsilon:eq}
		\end{equation}
	
	\end{defn}
	
	\begin{defn}[{Mass-conserving weak solutions}]
		Let $T \in (0,\infty]$. A weak solution $\ff$ to \eqref{main:eq}-\eqref{in:eq} on $[0,T)$, according to \Cref{defn:weaksolution}, is said to be mass-conserving on $[0,T)$ if
		\begin{align*}
			\mm_1(\ff(t)) = \mm_1(\ff^{\mathrm{in}}), \qquad t \in [0,T).
		\end{align*}
	\end{defn}

	
	\begin{rem}
	It is important to verify that the \Cref{defn:weaksolution} is well-defined. Specifically, if $\tf \in L^{\infty}(\mathbb{R}_+)$ and $\bb$ satisfies \eqref{nop_assump:eq}, then
	\begin{equation}
		|\Upsilon_{\tf}(x,y)| \leq \int_0^{x+y} |\tf(z)|\bb(z,x,y)\, dz + |\tf(x)| + |\tf(y)| \leq (\gamma + 2) \|\tf\|_{L^\infty(\mathbb{R}_+)} \label{upsilon_bound:eq}
	\end{equation}
	for all $(x, y) \in \mathbb{R}_+^2$. This estimate, combined with \eqref{wf1:eq} and \eqref{wf2:eq}, confirms that all terms in \eqref{wf3:eq} are well-defined.
	\end{rem}
	
\begin{rem}
	Let {$T \in (0,\infty]$} and a weak solution $\ff$ to \eqref{main:eq}-\eqref{in:eq} on $[0,T)$. Given that $\tf \in L^\infty(\mathbb{R}_+)$ implies $\Upsilon_\tf \in L^\infty(\mathbb{R}_+^2)$ by \eqref{upsilon_bound:eq}, It is straightforward to deduce from \eqref{wf2:eq} and \eqref{wf3:eq} that
	\begin{align*}
			t \mapsto \int_0^\infty \tf(x) \ff(t,x) \, dx \in W_{loc}^{1,1}(0,T).
	\end{align*}	
Furthermore, 
	\begin{align}
		\frac{d}{dt}  \int_0^\infty \tf(x) \ff(t,x) \, dx  = \frac{1}{2} \int_0^\infty \int_0^\infty \Upsilon_\tf(x,y) \kk(x,y) \ff(t,x) \ff(t,y) \, dy \, dx \quad \text{for a.e. } t \in (0,T). \label{alternative wf3:eq}
	\end{align}
\end{rem}

	We start with the establishment of mass-conserving weak solutions and demonstrate the following result for the restricted class of {breakage kernel $\bb$.}
		\begin{thm}[{Existence: \texorpdfstring{$\alpha \in (0, \sigma_1]$}{alpha in (0, sigma1]}}]\label{thm:existence E=0}
		Assume that the collision kernel satisfies \eqref{kernel:eq} with ${\sigma_1}>0$ and breakage kernel $\bb$ satisfies  \eqref{localc:eq}, \eqref{nop_assump:eq}, \eqref{ui_1_assump:eq}, \eqref{ui_2_assump:eq} with {$\alpha \in (0,{\sigma_1}]$} and \eqref{mth nop_assump:eq}. Additionally, suppose that there exists a constant $l_{{\sigma_1}} \ge 1$ such that
		\begin{align}
			\int_0^{x + y} z^{{\sigma_1}} \bb(z, x, y) \, dz \ge l_{{\sigma_1}} (x^{{\sigma_1}} + y^{{\sigma_1}}), \quad (x,y)\in\mathbb{R}_+^2. \label{AOM<1b:eq}
		\end{align}
	For the initial condition
		\begin{align}
		 \ff^{\mathrm{in}} \in \Xi_{0,+} \cap \Xi_{1} \quad \text{with} \quad 0 <\rho:=\mm_1(\ff^{\mathrm{in}}), \label{initia data section 2:eq}
		\end{align}
there is at least one mass-conserving weak solution $ \ff $ to \eqref{main:eq}-\eqref{in:eq} on $ [0, T_{\gamma,\sigma}) $, where
		\begin{equation}
		T_{\gamma,\sigma} := \begin{cases}
			T_\star(\ff^{\text{in}}) & \text{if } \sigma \in [0,1) ~\text{and}~ \gamma>2, \\
			\infty & \text{if } \sigma \in [1, 2) ~ \text{and}~ \gamma>2,\\
			\infty & \text{if } \sigma \in [0, 2) ~\text{and}~ \gamma=2, \label{tstar:eq}
		\end{cases}
	\end{equation}
	with
	\begin{equation}
		T_\star(\ff^{\text{in}}) := \frac{\mm_0(\ff^{\text{in}})^{\sigma - 1}}{\kappa(1 - \sigma)(\gamma - 2)\rho^\sigma}. \label{finite time:eq}
	\end{equation}
	
	Moreover, the mass-conserving weak solution $ \ff $ to \eqref{main:eq}-\eqref{in:eq} satisfies the following integrability properties for $T\in(0,T_{\gamma,\sigma})$:
	\begin{enumerate}

		\item [(a)] For every $m>1$ and $t\in(0,T)$, $\ff(t)\in \Xi_m$, and there is a positive constant $C_1(m,T)$ depending only on $\kappa, \sigma_1, \sigma_2, \varsigma_m, \varkappa_m$, {$\mm_{0}(\ff^{\mathrm{in}})$} and $\rho$ such that
		\begin{align}
		\mm_m(\ff(t)) \le C_1(m,T)\left(1+t^{-1}\right)^{\frac{m-1}{\sigma_2}}. \label{thm2 a:eq}
		\end{align}
		\item [(b)] If $\ff^{\mathrm{in}}\in \Xi_m$ for every $m>1$, then there is a positive constant $C_2(m,T)$ depending only on $\kappa, \sigma_1, \sigma_2, \varsigma_m, \varkappa_m$, {$\mm_{0}(\ff^{\mathrm{in}})$} and $\rho$ such that
	\begin{align}
		\mm_m(\ff(t)) \le \max\left\{\mm_m(\ff^{\mathrm{in}}), C_2(m,T)\right\} \label{thm1 b:eq}
		\end{align}
	for	all $t\in [0,T)$.
	\end{enumerate}
\end{thm}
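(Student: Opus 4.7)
The plan is to follow the weak-$L^1$ compactness approach of Stewart \cite{stewart1989} outlined in the introduction: approximate the equation by a truncated problem that is well-posed by a fixed-point argument, derive uniform moment estimates, and pass to the limit. For $n\ge 1$, set $\kk_n(x,y):=\kk(x,y)\mathbf{1}_{[0,n]^2}(x,y)$ and $\ff_n^{\mathrm{in}}:=\ff^{\mathrm{in}}\mathbf{1}_{(0,n)}$. Since $\kk_n$ is bounded, the corresponding truncated equation admits a unique non-negative solution $\ff_n\in\mathcal{C}([0,\infty),\Xi_{0,+}\cap\Xi_1)$ via Banach's fixed-point theorem, and this solution conserves mass: $\mm_1(\ff_n(t))=\mm_1(\ff_n^{\mathrm{in}})\le\rho$. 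The rest of the proof is devoted to extracting $\ff$ as a weak-$L^1$ limit of $(\ff_n)$, verifying \Cref{defn:weaksolution}, and establishing (a) and (b).

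The core of the argument is a uniform-in-$n$ moment analysis on $[0,T]$ for $T<T_{\gamma,\sigma}$. Testing \eqref{alternative wf3:eq} with $\tf\equiv 1$ gives $\dot{\mm}_0(\ff_n)\le\kappa(\gamma-2)\mm_{\sigma_1}(\ff_n)\mm_{\sigma_2}(\ff_n)$; the H\"older interpolation $\mm_{\sigma_i}(\ff_n)\le\mm_0(\ff_n)^{1-\sigma_i}\rho^{\sigma_i}$ reduces this to the scalar inequality $\dot{y}\le\kappa(\gamma-2)\rho^\sigma y^{2-\sigma}$, whose explicit integration yields a uniform bound for $\mm_0(\ff_n)$ on $[0,T_\star(\ff^{\mathrm{in}}))$ when $\sigma<1$ and $\gamma>2$, and a global bound in the remaining cases. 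A uniform strictly positive lower bound $\mm_{\sigma_1}(\ff_n(t))\ge c_0>0$ then follows from \eqref{AOM<1b:eq}: the integrand satisfies $\Upsilon_{W_{\sigma_1}}(x,y)\ge(l_{\sigma_1}-1)(x^{\sigma_1}+y^{\sigma_1})\ge 0$, so $t\mapsto\mm_{\sigma_1}(\ff_n(t))$ is non-decreasing, and for $n$ large enough $\mm_{\sigma_1}(\ff_n^{\mathrm{in}})$ is arbitrarily close to the positive number $\mm_{\sigma_1}(\ff^{\mathrm{in}})>0$ by monotone convergence.

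For $m>1$, plugging \eqref{mth nop_assump:eq} and \eqref{kernel:eq} into \eqref{alternative wf3:eq} yields, after symmetrisation,
\begin{equation*}
\dot{\mm}_m(\ff_n)\le-\varkappa_m\kappa\bigl[\mm_{\sigma_1}(\ff_n)\mm_{m+\sigma_2}(\ff_n)+\mm_{\sigma_2}(\ff_n)\mm_{m+\sigma_1}(\ff_n)\bigr]+\mathcal{R}_m(\ff_n),
\end{equation*}
where $\mathcal{R}_m$ collects only moments of order strictly below $m+\sigma_2$. Using $\mm_{\sigma_1}(\ff_n)\ge c_0$, H\"older's inequality to interpolate the terms in $\mathcal{R}_m$ between $\mm_1=\rho$ and $\mm_{m+\sigma_2}$, and Young's inequality to absorb the subcritical contributions, transforms this into $\dot{\mm}_m(\ff_n)\le-c_1\mm_m(\ff_n)^{1+\sigma_2/(m-1)}+c_2$. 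Comparing with the corresponding scalar ODE produces \eqref{thm2 a:eq} regardless of whether $\mm_m(\ff^{\mathrm{in}})$ is finite; when $\ff^{\mathrm{in}}\in\Xi_m$, the same inequality together with $\mm_m(\ff_n^{\mathrm{in}})\le\mm_m(\ff^{\mathrm{in}})$ yields \eqref{thm1 b:eq} directly.

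For the passage to the limit, uniform integrability of $\{\ff_n(t):n\ge 1,\,t\in[0,T]\}$ in $\Xi_0$ follows from \eqref{ui_1_assump:eq}-\eqref{ui_2_assump:eq} (with $\alpha<\sigma_1$) combined with the preceding moment bounds, via the Dunford-Pettis theorem; tightness at infinity is immediate from the superlinear moment bound. Time equicontinuity of $t\mapsto\int_0^\infty\tf(x)\ff_n(t,x)\,dx$ for $\tf\in L^\infty(\mathbb{R}_+)$ reduces, through \eqref{alternative wf3:eq} and \eqref{upsilon_bound:eq}, to an $L^1$-bound on the collision integrand already supplied by the previous estimates. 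A variant of the Arzel\`a-Ascoli theorem then extracts a subsequence converging in $\mathcal{C}([0,T],\Xi_{0,w})$ to some $\ff$; passing to the limit in the truncated weak formulation (invoking Egorov's theorem on sets where $\kk$ is bounded to handle the bilinear term) verifies \eqref{wf3:eq}, and mass conservation of $\ff$ is recovered from the uniform bound on $\mm_m(\ff_n(t))$ for any $m>1$. The most delicate step, in my view, is the extraction of the precise instantaneous smoothing exponent $(m-1)/\sigma_2$ in \eqref{thm2 a:eq}: it requires a tight interplay between the lower bound on $\mm_{\sigma_1}$ coming from \eqref{AOM<1b:eq}, the dissipation encoded in \eqref{mth nop_assump:eq}, and H\"older interpolation, a combination that is not available in the setting of \cite{GL22021}.
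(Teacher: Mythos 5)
Your proposal follows essentially the same route as the paper: truncate via \eqref{tkernel:eq}--\eqref{tin:eq}, apply a fixed-point argument for local well-posedness of the truncated system, establish the $\mm_0$ bound (Lemma~\ref{lem:UBOZM}), obtain a lower bound on $\mm_{\sigma_1}$ from \eqref{AOM<1b:eq} (Lemma~\ref{lem: uniform lower bound zero moment}), derive the superlinear moment inequality with the correct exponent $(m+\sigma_2-1)/(m-1)=1+\sigma_2/(m-1)$ and integrate the resulting ODE (Lemma~\ref{lem:supermoments 1}), prove uniform integrability from \eqref{ui_2_assump:eq} with $\alpha<\sigma_1$ (Lemma~\ref{lem:ui}), show time equicontinuity (Lemma~\ref{lem:equicontinuity}), and conclude via Dunford--Pettis and Arzel\`a--Ascoli. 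These steps match the paper faithfully.

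The gap lies in the final passage to the limit. You omit the paper's de la Vall\'ee Poussin step (Lemma~\ref{lem:refine supermoments}), which establishes a uniform-in-$t$ bound on $\mm_\psi(\ff_n(t))$ for a superlinear weight $\psi$ with $\mm_\psi(\ff^{\mathrm{in}})<\infty$. That bound is what upgrades the $\mathcal{C}([0,T],\Xi_{0,w})$ convergence to the uniform $\mathcal{C}([0,T],\Xi_{1,w})$ convergence \eqref{eq:X1mm+1convergence}, which in turn yields the weighted weak-$L^1$ convergence $\ff_n(\tau,x)\ff_n(\tau,y)\rightharpoonup\ff(\tau,x)\ff(\tau,y)$ in $L^1\bigl((0,t)\times\mathbb{R}_+^2,(1+x)(1+y)\,dy\,dx\,d\tau\bigr)$ needed to pass to the limit in the collision integral. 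You propose to replace this by (i) ``tightness at infinity from the superlinear moment bound'' and (ii) Egorov's theorem for the bilinear term. Neither fully works: the bound \eqref{thm2 a:eq} blows up as $t\to 0$, so it cannot give tightness of $\{\ff_n(t):t\in[0,T],\,n\ge 1\}$ in $\Xi_1$ that is uniform in $t$ (which is the point of Lemma~\ref{lem:refine supermoments}); and Egorov requires a.e.\ pointwise convergence of $\ff_n$, which weak-$L^1$ convergence does not provide. For each fixed $t>0$ your superlinear bound does imply $\mm_1(\ff(t))=\rho$ and, with a dominated convergence argument in $\tau$, one could likely rescue the identification of the limit in \eqref{wf3:eq} without the de la Vall\'ee Poussin weight; but as stated your closing paragraph does not give a valid argument, and the missing ingredient is precisely the uniform-in-time superlinear control that the paper builds in Lemma~\ref{lem:refine supermoments}.
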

	\begin{rem}\label{remark: mu sigma 1}
	The power law breakage kernel $ \bb_\nu $ defined in \eqref{example:eq} satisfies \eqref{AOM<1b:eq} with
	\begin{align*}
		l_{{\sigma_1}} &= \frac{2^{{\sigma_1} - 1} (\nu + 2)}{{\sigma_1} + \nu + 1} \quad \text{for} ~ \nu \in \left(-1,\nu_{\sigma_1} \right] \subset (-1, 0],\quad  \text{where}~ \nu_{\sigma_1}:= -\frac{1 + {\sigma_1} - 2^{{\sigma_1}}}{1 - 2^{{\sigma_1} - 1}} .
	\end{align*}
		In particular, \Cref{thm:existence E=0} is valid for all $ \bb_\nu $, with $-1<\nu\le\nu_{\sigma_1} <0$. 
	\end{rem}

	\Cref{thm:existence E=0} provides the existence of weak solutions to \eqref{main:eq}-\eqref{in:eq} for the case where the transfer of mass  is allowed, which was excluded in \cite{GL22021}. {However, this result is restricted to the breakage kernel $\bb$, which satisfies \eqref{AOM<1b:eq}, while expanding the class of initial data considered in \cite{GL12021}.}  The proof of \Cref{thm:existence E=0} is established using a compactness argument in the space $ \Xi_{0,+} \cap \Xi_{1} $, a method initially developed in \cite{stewart1989} for the coagulation-fragmentation equation and later modified  for \eqref{main:eq}-\eqref{in:eq} in \cite{GL22021} for the case where the transfer of mass is not allowed. Here, due to the presence of mass transfer, the time monotonicity of superlinear moments established in \cite{GL22021} is no longer valid, which is now handled by using a lower bound on the moment of order ${\sigma_1}$, {as in \cite{ali2024}}.\\

Next, let us state another result in \Cref{thm:existence}, on the existence of mass-conserving solution to \eqref{main:eq}-\eqref{in:eq}, which enlarges the class of breakage kernels considered in \Cref{thm:existence E=0}. Additionally, this result includes the case $\sigma_1=0$. However, \Cref{thm:existence} is valid for a further restricted set of initial data. Furthermore, this result extends \cite[Remark 2.4]{GL12021} to a larger class of collision kernels and breakage kernels.
	\begin{thm}[{Existence: $\alpha\in(0,1)$}]\label{thm:existence}
	Assume that the collision kernel satisfies \eqref{kernel:eq} and breakage kernel $\bb$ satisfies  \eqref{localc:eq}, \eqref{nop_assump:eq}, \eqref{ui_1_assump:eq}, \eqref{ui_2_assump:eq} with $\alpha \in (0,1)$ and \eqref{mth nop_assump:eq}. In addition, suppose that there exists $ L_{-\alpha} \ge 2^{-\alpha} $ such that
		\begin{align}
			\int_0^{x + y} z^{-\alpha} \bb(z, x, y) \, dz \le \frac{L_{-\alpha}}{2} (x^{-\alpha} + y^{-\alpha}), \quad (x, y) \in \mathbb{R}_+^2. \label{neg_nop_assump:eq}
		\end{align}
		For initial data satisfying \eqref{initia data section 2:eq} and
		\begin{align}
\int_0^\infty x^{-\alpha}\ff^{\mathrm{in}}(x)dx<\infty, \label{thm negative in:eq}
		\end{align}
		there is at least one mass-conserving weak solution $ \ff $ to \eqref{main:eq}-\eqref{in:eq} on $ [0, T_{\gamma,\sigma})$, where $T_{\gamma,\sigma}$ is defined in \eqref{tstar:eq}. Moreover, the mass-conserving weak solution $ \ff $ to \eqref{main:eq}-\eqref{in:eq} satisfies the following integrability properties for $T\in(0,T_{\gamma,\sigma})$:
	\begin{enumerate}
				\item [(a)] For every $m > 1$ and $t \in (0, T)$, we have $\ff(t) \in \Xi_m$. Additionally, there exists a positive constant $D_1(m, T)$, depending only on $\kappa, \sigma_1, \sigma_2, \varsigma_m, \varkappa_m, \rho, \mm_{0}(\ff^{\mathrm{in}})$, and $\mm_{-\alpha}(\ff^{\mathrm{in}})$ such that
		\begin{align}
			\mm_m(\ff(t)) \le D_1(m,T)\left(1+t^{-1}\right)^{\frac{m-1}{\sigma_2}}. \label{thm2 a:eq}
		\end{align}
		\item [(b)]If $\ff^{\mathrm{in}} \in \Xi_m$ for every $m > 1$, then there exists a positive constant $D_2(m, T)$, depending only on $\kappa, \sigma_1, \sigma_2, \varsigma_m, \varkappa_m, \rho, \mm_{0}(\ff^{\mathrm{in}})$, and $\mm_{-\alpha}(\ff^{\mathrm{in}})$, such that
		\begin{align}
			\mm_m(\ff(t)) \le \max\left\{\mm_m(\ff^{\mathrm{in}}), D_2(m,T)\right\} \label{thm2 b:eq}
		\end{align}
		for	all $t\in [0,T)$.
	\end{enumerate}
		
	\end{thm}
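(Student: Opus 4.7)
The plan is to adapt the weak $L^{1}$-compactness scheme underlying Theorem~\ref{thm:existence E=0}, replacing the lower $\sigma_{1}$-moment bound \eqref{AOM<1b:eq} by a lower bound on $\mm_{\sigma_{1}}$ derived from the uniformly propagated $-\alpha$-moment together with the nondecreasing character of $\mm_{0}$; the extra assumption \eqref{thm negative in:eq} on the initial datum is introduced precisely to enable this propagation. First I truncate the collision kernel, $\kk_{n}(x,y):=\kk(x,y)\mathbf{1}_{(0,n)^{2}}(x,y)$, and construct via Banach's fixed-point theorem non-negative mass-preserving solutions $\ff_{n}\in\mathcal{C}([0,\infty),\Xi_{0,+}\cap\Xi_{1})$ of the truncated problem, as in the proof of Theorem~\ref{thm:existence E=0}. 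Mass conservation $\mm_{1}(\ff_{n}(t))=\rho$, the monotonicity $\mm_{0}(\ff_{n}(t))\ge\mm_{0}(\ff^{\mathrm{in}})$ (from $\Upsilon_{1}\ge 0$ by \eqref{nop_assump:eq}), and an $n$-independent upper bound on $\mm_{0}(\ff_{n}(t))$ on $[0,T_{\gamma,\sigma})$ then follow as in Theorem~\ref{thm:existence E=0}.

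The main new estimate is the propagation of $\mm_{-\alpha}$: after regularizing $x^{-\alpha}$ by $(x\vee\varepsilon)^{-\alpha}$ and passing $\varepsilon\to 0$, testing \eqref{wf3:eq} against $\tf(x)=x^{-\alpha}$ and invoking \eqref{neg_nop_assump:eq} yields
\[
\Upsilon_{x^{-\alpha}}(x,y)\le \left(\tfrac{L_{-\alpha}}{2}-1\right)(x^{-\alpha}+y^{-\alpha}).
\]
Expanding $\kk$ via \eqref{kernel:eq} and splitting the mixed monomials $x^{\sigma_{i}-\alpha}y^{\sigma_{j}}$ by Young's inequality then produces a Gronwall-type inequality $\tfrac{d}{dt}\mm_{-\alpha}(\ff_{n})\le C\bigl(1+\mm_{-\alpha}(\ff_{n})\bigr)$, with $C$ depending only on $\rho$ and on the already-controlled $\mm_{0}(\ff_{n})$; hence $\mm_{-\alpha}(\ff_{n}(t))$ is uniformly bounded on every $[0,T]\subset[0,T_{\gamma,\sigma})$. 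Applying Hölder's inequality with conjugate exponents $p=(\alpha+\sigma_{1})/\alpha$ and $q=(\alpha+\sigma_{1})/\sigma_{1}$ to the identity $\mm_{0}(f)=\int x^{\sigma_{1}/p}f^{1/p}\cdot x^{-\alpha/q}f^{1/q}\,dx$ gives the interpolation
\[
\mm_{\sigma_{1}}(f)\ge \mm_{0}(f)^{(\alpha+\sigma_{1})/\alpha}\,\mm_{-\alpha}(f)^{-\sigma_{1}/\alpha},
\]
which combined with $\mm_{0}(\ff_{n}(t))\ge\mm_{0}(\ff^{\mathrm{in}})$ and the uniform $\mm_{-\alpha}$ bound yields a strictly positive, $n$- and $t$-uniform lower bound on $\mm_{\sigma_{1}}(\ff_{n}(t))$; an analogous bound holds for $\mm_{\sigma_{2}}$. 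This is precisely the substitute for \eqref{AOM<1b:eq} absent in the present setting, and with it the superlinear moment argument of Theorem~\ref{thm:existence E=0} carries over: testing against $x^{m}$, using \eqref{mth nop_assump:eq}, interpolating the positive contribution via log-convexity of moments between $\rho$ and $\mm_{m+\sigma_{j}}$, and invoking Jensen's inequality $\mm_{m+\sigma_{j}}\ge \mm_{m}^{1+\sigma_{j}/m}\mm_{0}^{-\sigma_{j}/m}$ produces a self-improving differential inequality from which Osgood's lemma yields \eqref{thm2 a:eq} without an initial superlinear moment, while a direct Gronwall argument gives \eqref{thm2 b:eq} when $\ff^{\mathrm{in}}\in\Xi_{m}$.

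Finally, the uniform integrability provided by \eqref{ui_2_assump:eq} together with the $\mm_{-\alpha}$ bound and Dunford--Pettis supply weak $L^{1}$-compactness in size, while \eqref{upsilon_bound:eq} and the uniform moment bounds give time equicontinuity of $(\ff_{n})$ in $\Xi_{0,w}$. A standard Arzelà--Ascoli-type extraction then produces a subsequential limit $\ff\in\mathcal{C}([0,T_{\gamma,\sigma}),\Xi_{0,w})$, passage to the limit in \eqref{wf3:eq} is routine once the quadratic collision integrand is controlled near $x=0$ via \eqref{ui_2_assump:eq} and the $\mm_{-\alpha}$ bound, and mass conservation is read off from the superlinear moment estimates, which preclude escape of mass to infinity. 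The main obstacle is the $\mm_{-\alpha}$ propagation itself: since the sign of $L_{-\alpha}/2-1$ is not a priori controlled and $x^{-\alpha}$ is singular at the origin, closing the Gronwall estimate requires a delicate Young-type separation of the singular weight from the subquadratic collision rates in \eqref{kernel:eq}, and it is this single step that makes assumption \eqref{neg_nop_assump:eq} essential; once achieved, it simultaneously feeds the uniform integrability for Dunford--Pettis and the substitute lower bound on $\mm_{\sigma_{1}}$ that drives the superlinear moment analysis.
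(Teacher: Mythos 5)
Your proposal is correct and follows essentially the same route as the paper: propagate the $-\alpha$-moment uniformly via a Gronwall inequality obtained from \eqref{neg_nop_assump:eq}, interpolate $\mm_{0}$ between $\mm_{-\alpha}$ and $\mm_{\sigma_{1}}$ by Hölder to recover the lower bound on $\mm_{\sigma_{1}}$ that substitutes for \eqref{AOM<1b:eq}, then reuse both this lower bound and the $\mm_{-\alpha}$ bound in the superlinear-moment and uniform-integrability estimates before closing by the standard Dunford--Pettis/Arzelà--Ascoli extraction (these are precisely the paper's Lemmas~\ref{lem:LB on alpha} and~\ref{lem:ui 1} feeding into the argument of Theorem~\ref{thm:existence E=0}). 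Minor implementation differences — your truncation by $\mathbf{1}_{(0,n)^2}$ versus the paper's $\mathbf{1}_{(0,n)}(x+y)$, invoking Osgood's lemma instead of the paper's explicit supersolution comparison, and reading off tightness from $\mm_{m}$ rather than from the de la Vallée Poussin moment $\mm_{\psi}$ of Lemma~\ref{lem:refine supermoments} — do not change the substance of the argument.
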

	\begin{rem}\label{rem:neg}
The power law breakage kernel $ \bb_\nu $ defined in \eqref{example:eq} satisfies \eqref{neg_nop_assump:eq} with
	\begin{align*}
		L_{-\alpha} = \frac{\nu + 2}{\nu + 1 - \alpha} 	\quad	 \text{for} ~ {\nu \in (\alpha-1, 0]}.
	\end{align*}
In particular, \Cref{thm:existence} is valid for all $ \bb_\nu $, with {$\alpha-1<\nu\le0$.}
	\end{rem}	
	
	\begin{rem}
		For the nonlinear fragmentation equation without mass transfer, the existence result established in \cite[Theorem 1.4]{GL22021} includes the case of classical multiplicative collision kernel corresponding to $\sigma_1 = \sigma_2 = 1$. However, the existence results established in \Cref{thm:existence E=0} and \Cref{thm:existence} for \eqref{main:eq}-\eqref{in:eq} with mass transfer do not cover this case, and it remains uncertain whether these theorems can be extended to this scenario. 
	\end{rem}

The next step toward establishing the well-posedness of \eqref{main:eq} within an appropriate framework involves addressing the issue of uniqueness. Notably, the proof follows a similar approach to the uniqueness results derived for \eqref{main:eq} without mass transfer; see \cite[Proposition 1.6]{GL22021}, which requires the finiteness of moment of order $1+\sigma_2$.

\begin{thm}\label{thm:uniquesness}
	Suppose there exists at least one mass-conserving weak solution $\ff$ to \eqref{main:eq} as defined in \Cref{defn:weaksolution}, provided by either \Cref{thm:existence E=0} or \Cref{thm:existence}. {If $\ff^{\mathrm{in}}\in \Xi_{1+{\sigma_2}}$,} then there exists a unique weak solution $\ff$ to \eqref{main:eq} on the interval $[0,T_{\gamma,\sigma})$ such that, for each $t\in [0,T_{\gamma,\sigma})$, $\mm_{1+\sigma_2}(\ff)\in L^1(0,t)$.
\end{thm}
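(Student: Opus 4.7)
The plan is to follow the blueprint of \cite[Proposition 1.6]{GL22021}, adapted to the presence of mass transfer. Let $\ff_1$ and $\ff_2$ be two weak solutions to \eqref{main:eq}--\eqref{in:eq} on $[0, T_{\gamma,\sigma})$ with common initial datum $\ff^{\mathrm{in}}$, both locally bounded in $\Xi_{1+\sigma_2}$, and set $g := \ff_1 - \ff_2$. Fix $T \in (0, T_{\gamma,\sigma})$; the aim is to establish a Gronwall-type estimate for a weighted $L^1$-norm of $g$ which, together with $g(0) = 0$, forces $g \equiv 0$ on $[0,T]$.

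The first step is to subtract the weak formulations \eqref{wf3:eq} written for $\ff_1$ and $\ff_2$, use the bilinear decomposition $\ff_1(x)\ff_1(y) - \ff_2(x)\ff_2(y) = g(x)\ff_1(y) + \ff_2(x) g(y)$, and test with a suitable regularization of $\tf(t,x) = x\,\sgn(g(t,x))$. Passing to the limit in the regularization (justified by $g \in L^\infty((0,T); \Xi_1 \cap \Xi_{1+\sigma_2})$), combining local mass conservation \eqref{localc:eq} (so $W(x,y) := \int_0^{x+y} z\,\bb(z,x,y)\,dz = x+y$) with the pointwise bound $|\int_0^{x+y} z\,\sgn(g(t,z))\,\bb(z,x,y)\,dz| \leq W(x,y)$, and symmetrizing the loss term, one obtains
\begin{equation*}
\frac{d}{dt}\mm_1(|g(t)|) \leq \int_0^\infty\!\!\int_0^\infty \kk(x,y)\,|g(t,x)|\,\bigg\{\frac{y-x}{2}\,\ff_1(t,y) + \frac{x+3y}{2}\,\ff_2(t,y)\bigg\}\, dy\,dx.
\end{equation*}
The extra contribution $\tfrac{x+3y}{2}\,\ff_2(y)$, absent from the no-mass-transfer analysis of \cite{GL22021}, reflects the possibility that a collision produces a fragment of size exceeding $\max\{x,y\}$.

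Next, expanding $\kk$ via \eqref{kernel:eq} and integrating in $y$ first, the right-hand side decomposes into a finite sum of products $\mm_{s_1}(|g|)\,\mm_{s_2}(\ff_j)$ with $s_1, s_2 \in \{\sigma_1, \sigma_2, 1+\sigma_1, 1+\sigma_2\}$. The $\mm_{s_2}(\ff_j)$ factors are uniformly bounded on $[0,T]$ by the $\Xi_{1+\sigma_2}$ hypothesis together with $\ff_j \in \Xi_0$ (from \Cref{thm:existence E=0}) or $\ff_j \in \Xi_{-\alpha}$ (from \Cref{thm:existence}) and standard interpolation. For the $|g|$-moments with $s_1 \notin \{0,1\}$, I derive two complementary estimates in parallel: one for $\mm_0(|g|)$ by testing with $\sgn(g)$, and one for $\mm_{1+\sigma_2}(|g|)$ by testing with $x^{1+\sigma_2}\sgn(g)$, whose gain term is controlled through hypothesis \eqref{mth nop_assump:eq} and provides a favorable absorption term with coefficient $\varkappa_{1+\sigma_2}$. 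The interpolations $\mm_s(|g|) \leq \mm_0(|g|)^{1-s}\mm_1(|g|)^{s}$ for $s \in [0,1]$ and $\mm_s(|g|) \leq \mm_1(|g|)^{(1+\sigma_2-s)/\sigma_2}\mm_{1+\sigma_2}(|g|)^{(s-1)/\sigma_2}$ for $s \in [1, 1+\sigma_2]$ then close the coupled system.

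The principal obstacle, specific to the mass-transfer setting, is precisely the extra $\tfrac{x+3y}{2}\,\ff_2(y)$ contribution: it forces the control of super-linear moments $\mm_{1+\sigma_1}(|g|)$ and $\mm_{1+\sigma_2}(|g|)$ in the differential inequality for $\mm_1(|g|)$, whereas in \cite[Proposition 1.6]{GL22021} only the linear moment appears on the right. The hypothesis that $\ff_1$ and $\ff_2$ are locally bounded in $\Xi_{1+\sigma_2}$ is exactly what is needed to tame these super-linear terms. Once the coupled Gronwall-type inequality for $(\mm_0(|g|), \mm_1(|g|), \mm_{1+\sigma_2}(|g|))$ is in place, the condition $g(0, \cdot) = 0$ forces all three quantities to vanish identically on $[0,T]$; since $T \in (0, T_{\gamma,\sigma})$ is arbitrary, this yields $\ff_1 \equiv \ff_2$ on $[0, T_{\gamma,\sigma})$ as claimed.
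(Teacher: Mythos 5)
The paper's proof of this theorem is simply the citation ``Refer to \cite[Proposition 1.6]{GL22021}'', i.e.\ the symmetric Gronwall argument from the no-mass-transfer setting carries over almost verbatim, with \eqref{localc:eq} replacing the corresponding identity for $\bar\bb$. Your proposal follows the same broad blueprint, but a miscarriage in the estimation step pushes you into a coupled system involving $\mm_{1+\sigma_2}(|g|)$, and the closure of that system is where your argument breaks.

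Concretely, the problematic step is your intermediate bound
\begin{equation*}
\frac{d}{dt}\mm_1(|g(t)|) \leq \int_0^\infty\!\!\int_0^\infty \kk(x,y)\,|g(t,x)|\,\Big\{\tfrac{y-x}{2}\,\ff_1(t,y) + \tfrac{x+3y}{2}\,\ff_2(t,y)\Big\}\, dy\,dx,
\end{equation*}
in which a factor of $x$ ends up multiplying $|g(t,x)|$. That factor is spurious. If instead you symmetrize the bilinear decomposition as $\ff_1(x)\ff_1(y)-\ff_2(x)\ff_2(y)=\tfrac12\big[g(x)h(y)+h(x)g(y)\big]$ with $h:=\ff_1+\ff_2$, the symmetry of $\Upsilon_\tf$ and of $\kk$ collapses the subtracted weak formulations to $\tfrac12\int\!\!\int \Upsilon_\tf(x,y)\kk(x,y)\,g(x)h(y)\,dy\,dx$. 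Testing with $\tf=(1+x)\sgn g$ and using $\int_0^{x+y}(1+z)\bb(z,x,y)\,dz = N(x,y)+x+y\le\gamma+x+y$ gives the pointwise bound $\Upsilon_\tf(x,y)\,g(x)\le(\gamma+2y)\,|g(x)|$, so the size-$y$ factor sits entirely on the $h(y)$ side. After expanding $\kk$ via \eqref{kernel:eq}, the right-hand side is a sum of products $\mm_{s}(|g|)\,\mm_{s'}(h)$ with $s\in\{\sigma_1,\sigma_2\}\subset[0,1]$ and $s'\in\{\sigma_1,\sigma_2,1+\sigma_1,1+\sigma_2\}$; the $\mm_{s}(|g|)$ factors are bounded by $\Lambda:=\mm_0(|g|)+\mm_1(|g|)$ and the $\mm_{s'}(h)$ factors are uniformly bounded on $[0,T]$ by the $\Xi_{1+\sigma_2}$ hypothesis and interpolation. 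Gronwall in $\Lambda$ then yields $g\equiv 0$. No superlinear moment of $|g|$ ever enters.

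Your proposed repair for the extra terms — testing with $x^{1+\sigma_2}\sgn g$ and invoking \eqref{mth nop_assump:eq} — does not close the system. Applying \eqref{mth nop_assump:eq} with $m=1+\sigma_2$ gives, after expanding $\kk$, a term proportional to $\mm_{\sigma_1}(|g|)\,\mm_{1+2\sigma_2}(h)$, while the absorption term $-\varkappa_{1+\sigma_2}x^{1+\sigma_2}|g(x)|$ only controls moments of $|g|$, not of $h$. Thus closing your coupled $(\mm_0,\mm_1,\mm_{1+\sigma_2})$ system requires $\ff_1,\ff_2$ bounded in $\Xi_{1+2\sigma_2}$, strictly more than the $\Xi_{1+\sigma_2}$ boundedness assumed in the theorem (unless $\sigma_2=0$). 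This is a genuine gap, and it originates from the misplaced $x$ in your intermediate estimate: the presence of mass transfer is handled, as in the no-transfer proof, by \eqref{localc:eq} alone and does not by itself force superlinear moments of the difference $g$ into the Gronwall inequality.
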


\begin{proof}
	Refer to \cite[Proposition 1.6]{GL22021}.
\end{proof}

	\section{Existence via Compactness Method}\label{sec:existence}
	This section focuses on the establishment of the proofs for \Cref{thm:existence E=0}, and  \Cref{thm:existence}. Throughout this section, we assume that assumptions \eqref{localc:eq}, \eqref{nop_assump:eq}, \eqref{kernel:eq}, \eqref{ui_1_assump:eq}, \eqref{ui_2_assump:eq} and \eqref{mth nop_assump:eq} on $\kk$ and $\bb$ are satisfied and the constants $\Pi_i$ for {$i\in \{1,2,\dots, 12 \}$} are positive and depend only on $\kk$, $\bb$, and $\ff^{\mathrm{in}}$, with any dependence on additional parameters being indicated explicitly. Let 
	 \begin{align}
	 \ff^{\mathrm{in}} \in \Xi_{0,+} \cap \Xi_1 \label{in_0 and 1:eq}
	 \end{align}
	be an initial condition such that 
	\begin{align}
	\rho = \mm_1(\ff^{\mathrm{in}}) > 0. \label{rho :eq}
	\end{align}
		We begin by demonstrating the well-posedness of \eqref{main:eq}-\eqref{in:eq} through an appropriately constructed estimate for the collision kernel $ \kk $. Specifically, for integer values of $ n \geq 1 $, the truncated collision kernel $ \kk_n $ and the corresponding initial data $ \ff_n^{\mathrm{in}} $ are defined as follows
	
	\begin{equation}
		\kk_n(x, y) := \kk(x, y)\textbf{1}_{(0,n)}(x+y),\quad  (x, y) \in \mathbb{R}_+^2. \label{tkernel:eq}
	\end{equation}
	
		\begin{equation}
		\ff_n^{\mathrm{in}} (x):= \ff^{\mathrm{in}}\textbf{1}_{(0,n)}(x), \quad x\in \mathbb{R}_+, \label{tin:eq}
	\end{equation}
	
	\begin{prop}\label{prop:locwp}
		For each $ n \geq 1 $, there exists a unique non-negative strong solution
		\begin{equation}
			\ff_n \in \mathcal{C}^{1}([0,T_n), L^1((0,n),dx))
		\end{equation}
		to the equation
		\begin{align}
			\partial_t \ff_n(t,x) = & \frac{1}{2} \int_{x}^{n} \int_{0}^{y} \bb(x,y-z,z) \kk_n(y-z,z) \ff_n(t,y-z) \ff_n(t,z) \, dz \, dy \nonumber \\
			& - \int_{0}^{n-x} \kk_n(x,y) \ff_n(t,x) \ff_n(t,y) \, dy, \quad (t,x) \in \mathbb{R}_+^2, \label{tmain:eq}
		\end{align}
		with the initial condition
		\begin{equation}
			\ff_n(0,x) = \ff_n^{\mathrm{in}}(x) \geq 0, \quad x \in \mathbb{R}_+, \label{t_initialdata:eq}
		\end{equation}
		defined for a maximal existence time $ T_n \in (0, \infty] $. Additionally, if $ T_n < \infty $, then
	{	\begin{equation}
			\lim_{t\to T_n} \int_0^n \ff_n(t,x)dx= \infty. \label{t_blow_up:eq}
		\end{equation}}
		
		Moreover, $ \ff_n $ satisfies the truncated mass conservation
		\begin{equation}
			\int_0^n x\ff_n(t,x)dx = \int_0^nx\ff_n^{\mathrm{in}}(x)dx \le \rho, \quad t \in [0,T_n). \label{tmassconservation:eq}
		\end{equation}
	\end{prop}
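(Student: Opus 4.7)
My plan is to formulate \eqref{tmain:eq}--\eqref{t_initialdata:eq} as an abstract Cauchy problem $\dot \ff_n = \mathcal{Q}_n[\ff_n]$ in the Banach space $X_n := L^1((0,n),dx)$, where $\mathcal{Q}_n = \mathcal{Q}_n^+ - \mathcal{Q}_n^-$ denotes the bilinear collision operator built from $\kk_n$ and $\bb$, and then to invoke the Banach fixed-point theorem. Two elementary observations tame the nonlinearity: the truncation support $\{x+y<n\}$ together with \eqref{kernel:eq} yields $\kk_n(x,y)\le 2\kappa n^{\sigma}$ on $(0,n)^2$, while \eqref{nop_assump:eq} gives $\int_0^{x+y}\bb(z,x,y)\,dz\le\gamma$. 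Exchanging the order of integration in the gain term via the change of variables $(y,z)\mapsto(u=y-z,z)$ and combining the two bounds with the elementary estimate $|gg'-hh'|\le|g|\,|g'-h'|+|h'|\,|g-h|$ produces, for $g,h\in X_n$ with $\|g\|_{X_n},\|h\|_{X_n}\le R$, the Lipschitz inequality
\begin{equation*}
  \|\mathcal{Q}_n[g]-\mathcal{Q}_n[h]\|_{X_n}\le C(\gamma,\kappa,n)\,R\,\|g-h\|_{X_n}.
\end{equation*}
The Picard--Lindel\"of theorem then delivers a unique maximal solution $\ff_n\in\mathcal{C}^1([0,T_n),X_n)$, equipped with the standard continuation dichotomy: either $T_n=\infty$, or $\|\ff_n(t)\|_{X_n}$ diverges as $t\to T_n^-$.

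For non-negativity, I would recast the equation in Duhamel form. Introducing the loss coefficient $A_n[\ff_n](t,x):=\int_0^{n-x}\kk_n(x,y)\ff_n(t,y)\,dy$, the solution satisfies
\begin{equation*}
  \ff_n(t,x)=\ff_n^{\mathrm{in}}(x)\,e^{-\int_0^t A_n[\ff_n](\tau,x)\,d\tau}+\int_0^t e^{-\int_s^t A_n[\ff_n](\tau,x)\,d\tau}\,\mathcal{Q}_n^+[\ff_n](s,x)\,ds.
\end{equation*}
Running a Picard iteration directly on this Duhamel form inside the closed positive cone of $X_n$---invariant because $\mathcal{Q}_n^+$ maps non-negative functions to non-negative functions---yields $\ff_n\ge 0$ on $[0,T_n)\times(0,n)$.

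The truncated mass-conservation identity \eqref{tmassconservation:eq} should then follow from multiplying \eqref{tmain:eq} by $x$, integrating over $(0,n)$, and applying Fubini, whose use is legitimized by the $\mathcal{C}^1$-regularity of $\ff_n$ in $X_n$ and the boundedness of $\kk_n$. In the gain contribution, swapping to the variables $(u,z,x)$ and invoking the first identity of \eqref{localc:eq}, namely $\int_0^{u+z} x\,\bb(x,u,z)\,dx=u+z$, converts the gain term into $\tfrac12\int_0^n\int_0^{n-u}(u+z)\,\kk_n(u,z)\,\ff_n(t,u)\ff_n(t,z)\,dz\,du$; after symmetrization in $(u,z)$ this exactly cancels the loss contribution, so $\tfrac{d}{dt}\int_0^n x\,\ff_n(t,x)\,dx=0$, which is \eqref{tmassconservation:eq}.

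The most delicate step of the plan is \eqref{t_blow_up:eq}. The Picard dichotomy naturally controls $\|\ff_n\|_{X_n}=\mm_0(\ff_n)$, whereas \eqref{t_blow_up:eq} is phrased in terms of the first moment. To bridge this gap, I would complement the $L^1$-blow-up with a differential inequality for $\mm_0(\ff_n)$ coming from \eqref{nop_assump:eq} and the truncation bound $\kk_n\le 2\kappa n^{\sigma}$, and use a Gronwall-type argument to show that $\mm_0(\ff_n(t))$ remains finite as long as $\mm_1(\ff_n(t))$ does. Combined with the mass identity established above, this will in fact force $T_n=\infty$ for every $n$, which is precisely the role \eqref{t_blow_up:eq} plays in the subsequent compactness analysis.
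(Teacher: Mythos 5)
Your overall architecture (abstract ODE in $X_n=L^1((0,n))$, Picard--Lindel\"of after exhibiting a local Lipschitz bound from the uniform kernel estimate $\kk_n\le 2\kappa n^{\sigma}$ and \eqref{nop_assump:eq}, positivity via the Duhamel representation with the loss coefficient, and mass conservation via Fubini and the change of variables combined with \eqref{localc:eq}) is the standard one, and it is the content of the references the paper cites; the paper itself offers no proof beyond the citation to \cite{barik2020,walker2002}, so your write-up is genuinely more explicit than the source.

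The one place where your plan over-claims is the final paragraph about \eqref{t_blow_up:eq}. You correctly identify the mismatch -- Picard controls $\mm_0(\ff_n)=\|\ff_n\|_{X_n}$, whereas \eqref{t_blow_up:eq} speaks of $\mm_1(\ff_n)$ -- but the Gronwall step you propose does not close the gap in general. From \eqref{nop_assump:eq}, \eqref{kernel:eq} and H\"older's inequality one gets
\begin{equation*}
\frac{d}{dt}\mm_0(\ff_n(t)) \le \kappa(\gamma-2)\,\mm_1(\ff_n(t))^{\sigma}\,\mm_0(\ff_n(t))^{2-\sigma},
\end{equation*}
and the cruder truncation bound $\kk_n\le 2\kappa n^{\sigma}$ only worsens this to a Riccati inequality $\dot{\mm}_0\le C n^{\sigma}\mm_0^2$. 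When $\sigma\ge 1$ (or when $\gamma=2$, where $\mm_0$ is conserved) the exponent $2-\sigma\le 1$ and a Gronwall argument does yield that $\mm_0$ stays finite on every finite interval, hence $T_n=\infty$. But when $\sigma\in[0,1)$ and $\gamma>2$, the right-hand side is strictly superlinear in $\mm_0$ even after using the conserved mass $\mm_1\le\rho$, so boundedness of $\mm_1$ does \emph{not} prevent finite-time blow-up of $\mm_0$ via any Gronwall-type lemma, and indeed the paper's own \Cref{lem:UBOZM} only bounds $\mm_0$ on $[0,T]$ for $T<T_\star(\ff^{\mathrm{in}})$ in that regime. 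Your claim that the Gronwall step ``will in fact force $T_n=\infty$ for every $n$'' is therefore unsubstantiated precisely in the case $\sigma<1$, $\gamma>2$. What your moment estimate does deliver there is $T_n\ge T_\star(\ff^{\mathrm{in}})$ uniformly in $n$, which is all the subsequent compactness argument needs since it works on $[0,T]$ with $T<T_{\gamma,\sigma}=T_\star(\ff^{\mathrm{in}})$; for the full statement \eqref{t_blow_up:eq} one must either work directly in the weighted space $L^1((0,n),(1+x)\,dx)$ so that the continuation criterion already involves $\mm_1$, or defer to the cited references where the criterion is established in that form.
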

	
	\begin{proof}
		The existence and uniqueness of a non-negative strong solution \( \ff_n \in \mathcal{C}^1([0, T_n), L^1(0, n)) \) to \eqref{tmain:eq}-\eqref{tin:eq}, along with the blowup property \eqref{t_blow_up:eq} and the truncated mass conservation, follow directly from \cite[Proposition 1]{barik2020} or \cite[Section 2]{walker2002}.
	\end{proof}
	
	To extend $ \ff_n $ to $ (0, T) \times \mathbb{R}_+ $, we set $ \ff_n(t, x) = 0 $ for $ (t, x) \in (0, T) \times (n, \infty) $ and still denote this extension as $ \ff_n $. Then, from \eqref{tkernel:eq}, \eqref{tmain:eq}, \eqref{tin:eq}, and \Cref{prop:locwp}, it follows that
	\begin{align}
		\frac{d}{dt} \int_0^\infty \tf(x) \ff_n(t,x) \, dx &= \frac{1}{2} \int_0^\infty \int_0^\infty \Upsilon_{\tf}(x,y) \kk_n(x,y) \ff_n(t,x) \ff_n(t,y) \, dy \, dx \label{twf:eq}
	\end{align}
	for all $ t \in (0,T_n) $ and $ \tf \in L^\infty(\mathbb{R}_+) $.
	

		\begin{lem}\label{lem:UBOZM}
			Let $T\in (0,T_{\gamma,\sigma})$, where $T_{\gamma,\sigma}$ being defined in \eqref{tstar:eq}, then there exists a positive constant $\Pi_1(T)$ such that
				\begin{equation}
					\mm_0(\ff_n(t)) \le \Pi_1(T)\,, \label{UBZM:eq}
				\end{equation}
				for all $t\in[0,T]$ and $n\ge1$.
		\end{lem}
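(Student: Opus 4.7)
\medskip
\noindent\textbf{Proof proposal.} The plan is to derive a closed differential inequality for $M_n(t) := \mm_0(\ff_n(t))$ and then integrate it case by case according to the definition of $T_{\gamma,\sigma}$ in \eqref{tstar:eq}. First, I would apply the weak formulation \eqref{twf:eq} with the constant test function $\tf \equiv 1 \in L^\infty(\mathbb{R}_+)$, for which
\begin{equation*}
\Upsilon_1(x,y) = \mathrm{N}(x,y) - 2 \in [0, \gamma - 2]
\end{equation*}
by \eqref{nop_assump:eq}. Since $\kk_n \leq \kk$ and $\ff_n \geq 0$, this yields
\begin{equation*}
\frac{dM_n}{dt}(t) \leq \frac{\gamma - 2}{2}\int_0^\infty\!\!\int_0^\infty \kk(x,y)\ff_n(t,x)\ff_n(t,y)\,dy\,dx.
\end{equation*}

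Next, I would use the explicit form \eqref{kernel:eq} to write the right-hand side as $\kappa(\gamma-2)\,\mm_{\sigma_1}(\ff_n(t))\,\mm_{\sigma_2}(\ff_n(t))$, and then bound each sublinear moment by interpolation: for $s \in [0,1]$ and any $g \in \Xi_{0,+}\cap\Xi_1$,
\begin{equation*}
\mm_s(g) \leq \mm_0(g)^{1-s}\,\mm_1(g)^{s}.
\end{equation*}
Combining this with the truncated mass conservation \eqref{tmassconservation:eq}, which yields $\mm_1(\ff_n(t)) \leq \rho$, I obtain the Bernoulli-type inequality
\begin{equation*}
\frac{dM_n}{dt}(t) \leq \kappa(\gamma-2)\rho^{\sigma}\,M_n(t)^{2-\sigma},\qquad M_n(0) \leq \mm_0(\ff^{\mathrm{in}}).
\end{equation*}

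Finally, I would integrate this ODE according to the three regimes in \eqref{tstar:eq}. If $\gamma = 2$, the right-hand side vanishes and $M_n$ is non-increasing, so $M_n(t)\leq \mm_0(\ff^{\mathrm{in}})$ globally. If $\gamma > 2$ and $\sigma \in [1,2)$, then $2-\sigma \in (0,1]$, the inequality is at most linear in $M_n$ (using $M_n^{2-\sigma} \leq 1 + M_n$), and Grönwall yields a global-in-time bound growing at most exponentially with $t$. The delicate case is $\gamma > 2$ and $\sigma \in [0,1)$, where the exponent $2-\sigma > 1$ causes finite-time blow-up of the comparison ODE; rewriting the inequality as
\begin{equation*}
\frac{d}{dt}\bigl(M_n^{\sigma-1}\bigr) \geq -\kappa(1-\sigma)(\gamma-2)\rho^{\sigma},
\end{equation*}
integration gives
\begin{equation*}
M_n(t)^{\sigma-1} \geq M_n(0)^{\sigma-1} - \kappa(1-\sigma)(\gamma-2)\rho^{\sigma}\,t \geq \mm_0(\ff^{\mathrm{in}})^{\sigma-1} - \kappa(1-\sigma)(\gamma-2)\rho^{\sigma}\,t,
\end{equation*}
and the right-hand side remains strictly positive precisely for $t < T_\star(\ff^{\mathrm{in}})$ defined in \eqref{finite time:eq}. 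Hence for any $T < T_\star(\ff^{\mathrm{in}})$ one obtains a uniform bound depending on $T$.

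The main obstacle is the sublinear case: one has to verify that the life-span of the scalar comparison ODE matches exactly the threshold $T_\star(\ff^{\mathrm{in}})$ appearing in \eqref{tstar:eq}, which relies on the $n$-independent upper bound $M_n(0) \leq \mm_0(\ff^{\mathrm{in}})$ from \eqref{tin:eq} together with the fact that $z \mapsto z^{\sigma-1}$ is decreasing for $\sigma < 1$. All other steps are routine manipulations.
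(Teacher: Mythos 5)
Your proposal is correct and follows essentially the same route as the paper: test with $\tf \equiv 1$, bound $\Upsilon_1 = \mathrm{N}-2 \leq \gamma-2$, interpolate the $\sigma_1$- and $\sigma_2$-moments between $\mm_0$ and $\mm_1 \leq \rho$ to obtain $\dot M_n \leq \kappa(\gamma-2)\rho^\sigma M_n^{2-\sigma}$, and then integrate. The only (cosmetic) deviation is that you merge the $\sigma \in [1,2)$ sub-case via $M_n^{2-\sigma} \leq 1+M_n$ and Grönwall, whereas the paper treats $\sigma=1$ and $\sigma \in (1,2]$ separately by direct integration of the Bernoulli inequality; both yield the same $n$-uniform bound on $[0,T]$.
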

	\begin{proof}
		Let $t \in [0,T]$ and set $\tf(x)=1$ for $x\in \mathbb{R}_+$, then from \eqref{nop_assump:eq}, \eqref{upsilon:eq} and \eqref{twf:eq}, we deduce that
				\begin{align}
				\frac{d}{dt} \mm_0(\ff_n(t)) = \frac{1}{2} \int_0^\infty \int_0^\infty \left[N(x,y) - 2 \right] \kk_n(x,y) \ff_n(t,x) \ff_n(t,y)\ dydx.\label{UBZM 0:eq}
			\end{align}
				On the one hand, when $\gamma=2$ (recalling that $\gamma$ is defined in \eqref{nop_assump:eq}), from \eqref{UBZM 0:eq}, we get for $\sigma\in[0,2]$
			\begin{align}
				\frac{d}{dt} \mm_0(\ff_n(t)) = 0 \quad \implies \quad \mm_0(\ff_n(t))= \mm_0(\ff_n^{\mathrm{in}})\le  \mm_0(\ff^{\mathrm{in}}). \label{UBZM 3:eq}
			\end{align}
		On the other hand, when $\gamma \neq 2$, from  \eqref{nop_assump:eq}, \eqref{kernel:eq}, \eqref{twf:eq} and \eqref{UBZM 0:eq}, we deduce that
			
		\begin{align}
			\frac{d}{dt} \mm_0(\ff_n(t)) & \le \frac{\gamma-2}{2} \int_0^\infty \int_0^\infty \kk(x,y) \ff_n(t,x) \ff_n(t,y)\ dydx \nonumber\\
			&\le \kappa(\gamma-2)\mm_{{\sigma_1}}(\ff_n(t))\mm_{{{\sigma_2}}}(\ff_n(t)). \label{UBZM 1:eq}
		\end{align}
		
	Since $0\le {\sigma_1}\le{\sigma_2} \le 1$ by \eqref{kernel:eq}, it follows from H\"older's inequality that
	\begin{align}
	 	\mm_{{\sigma_1}}(\ff_n(t))&\le \mm_0(\ff_n(t))^{1-{{\sigma_1}}}\mm_1(\ff_n(t))^{{\sigma_1}}, \label{lambda_1:eq}\\
	 \mm_{{\sigma_2}}(\ff_n(t))&\le \mm_0(\ff_n(t))^{1-{{\sigma_2}}}\mm_1(\ff_n(t))^{{\sigma_2}}. \label{lambda_2:eq}
	\end{align}
		Recall that $\sigma={\sigma_1}+{\sigma_2}$ and use \eqref{tmassconservation:eq}, \eqref{lambda_1:eq} and \eqref{lambda_2:eq} in \eqref{UBZM 1:eq} to get
			\begin{align}
			\frac{d}{dt} \mm_0(\ff_n(t)) &  \le \kappa(\gamma-2)\mm_0(\ff_n(t))^{2-{\sigma}}\mm_1(\ff_n(t))^{\sigma} \nonumber\\
			&  \le \kappa(\gamma-2)\rho^{\sigma}\mm_0(\ff_n(t))^{2-{\sigma}}. \label{UBZM 2:eq}
		\end{align}

Since the bound on $\mm_0(\ff_n(t))$ to be derived from \eqref{UBZM 2:eq} depends on $\sigma$, it is necessary to analyze  \eqref{UBZM 2:eq} for  different ranges of $\sigma$.
		\begin{enumerate}
			\item If $\sigma \in  [0,1)$, then integrating \eqref{UBZM 2:eq} over time, we obtain
			\begin{equation*}
			{\mm_0(\ff_n(t)) }\le \left( \mm_0(\ff_n^{\mathrm{in}})^{\sigma-1} - \kappa(1-\sigma)(\gamma-2) \rho^\sigma t \right)^{-1/(1-\sigma)}\,
			\end{equation*}
			for $t\in [0,T),$ where $T\in \left(0, {T_\star(n)} \right)$ {with $T_\star(n)$ defined as} $${T_\star(n)}:=\frac{\mm_0(\ff_n^{\mathrm{in}})^{\sigma-1}}{\kappa(1-\sigma)(\gamma-2) \rho^\sigma} \ge T_\star(\ff^{\mathrm{in}}).$$
			Since $\sigma\in [0,1)$ and $\mm_0(\ff_n^{\mathrm{in}})^{\sigma-1} \ge \mm_0(\ff^{\mathrm{in}})^{\sigma-1}$, we get
				\begin{equation*}
					{\mm_0(\ff_n(t)) } \le \left( \mm_0(\ff^{\mathrm{in}})^{\sigma-1} - \kappa(1-\sigma)(\gamma-2) \rho^\sigma T \right)^{-1/(1-\sigma)}\,.
			\end{equation*}
			
			\item If $\sigma= 1$,  then 			
				\begin{align*}
				 \mm_0(\ff_n(t))  \le \mm_0(\ff_n^{\mathrm{in}})e^{\kappa(\gamma-2) \rho t}\le \mm_0(\ff^{\mathrm{in}})e^{{\kappa}(\gamma-2) \rho T}.
			\end{align*}
			
			\item If $\sigma \in (1,2]$,  then 
				\begin{equation*}
				\mm_0(\ff_n(t)) \le \left( \mm_0(\ff_n^{\mathrm{in}})^{\sigma-1} + \kappa(\sigma-1)(\gamma-2) \rho^\sigma t \right)^{1/(\sigma-1)}\,,
			\end{equation*}
			for $t\in [0,T]$. Since $\sigma\in (1,2]$ and $\mm_0(\ff_n^{\mathrm{in}})^{\sigma-1} \le \mm_0(\ff^{\mathrm{in}})^{\sigma-1}$, we get
			\begin{equation*}
				\mm_0(\ff_n(t)) \le \left( \mm_0(\ff^{\mathrm{in}})^{\sigma-1} + \kappa(\sigma-1)(\gamma-2) \rho^\sigma T \right)^{1/(\sigma-1)}\,.
			\end{equation*}
		\end{enumerate}
	\end{proof}

\subsection{Existence:~$\alpha\in(0,\sigma_1]$}
In this section, we additionally assume that ${\sigma_1}>0$ and {$\alpha\in(0,{\sigma_1}]$}, and the breakage kernel $\bb$ satisfies \eqref{AOM<1b:eq}. The first step is to establish bounds on the superlinear moments, which relies on obtaining a lower bound for the moment of order $\sigma_1$ { by following the approach in \cite{ali2024}}. The next lemma focuses on deriving this lower bound, being inspired by \cite[Lemma 3.3]{ali2024}.
			\begin{lem}\label{lem: uniform lower bound zero moment}
			For any $T>0$, 
			\begin{align}
				\mm_{{{\sigma_1}}}(\ff_n(t)) \ge\Pi_2 \label{a lower bound on lambda_1 moment:eq}
			\end{align}
			for all $t\in [0,T]$ and $n\ge n_0$, where $\Pi_{2}:=\frac{\mm_{{\sigma_1}}(\ff^{\mathrm{in}})}{2}.$
		\end{lem}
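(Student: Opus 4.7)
The plan is to show monotonicity of $t \mapsto \mm_{\sigma_1}(\ff_n(t))$ directly from the weak formulation \eqref{twf:eq}, then compare $\mm_{\sigma_1}(\ff_n^{\mathrm{in}})$ with $\mm_{\sigma_1}(\ff^{\mathrm{in}})$ and pick $n_0$ large.

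First, I would apply \eqref{twf:eq} with the test function $\tf(x) = x^{\sigma_1}$. Strictly speaking this is not in $L^\infty(\mathbb{R}_+)$, so I would use the bounded test function $\tf_R(x) := x^{\sigma_1}\mathbf{1}_{(0,R)}(x)$ for some $R>2n$. Since $\ff_n(t,\cdot)$ is supported in $(0,n)$ and $\kk_n(x,y)$ is supported in $\{x+y<n\}$, for $(x,y)$ in the support of the right-hand side of \eqref{twf:eq} we have $x,y<n<R$ and the integration variable $z$ in $\Upsilon_{\tf_R}(x,y)$ runs over $(0,x+y)\subset(0,n)$, so $\tf_R$ coincides with $z\mapsto z^{\sigma_1}$ on the relevant range. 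Thus for such $R$,
\begin{equation*}
\Upsilon_{\tf_R}(x,y) = \int_0^{x+y} z^{\sigma_1}\bb(z,x,y)\,dz - x^{\sigma_1} - y^{\sigma_1}
\end{equation*}
for a.e.\ $(x,y)$ in the support of $\kk_n \ff_n(t)\otimes\ff_n(t)$.

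Next, I would invoke assumption \eqref{AOM<1b:eq}, which gives
\begin{equation*}
\Upsilon_{\tf_R}(x,y) \ge (l_{\sigma_1}-1)(x^{\sigma_1} + y^{\sigma_1}) \ge 0
\end{equation*}
since $l_{\sigma_1}\ge 1$. Plugging into \eqref{twf:eq} yields $\frac{d}{dt}\mm_{\sigma_1}(\ff_n(t))\ge 0$, so $t\mapsto \mm_{\sigma_1}(\ff_n(t))$ is non-decreasing on $[0,T_n)$, in particular
\begin{equation*}
\mm_{\sigma_1}(\ff_n(t)) \ge \mm_{\sigma_1}(\ff_n^{\mathrm{in}}), \qquad t\in[0,T_n).
\end{equation*}

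Finally, I would show that $\mm_{\sigma_1}(\ff_n^{\mathrm{in}})$ approaches $\mm_{\sigma_1}(\ff^{\mathrm{in}})$. By H\"older's inequality applied to \eqref{in_0 and 1:eq},
\begin{equation*}
\mm_{\sigma_1}(\ff^{\mathrm{in}}) \le \mm_0(\ff^{\mathrm{in}})^{1-\sigma_1}\mm_1(\ff^{\mathrm{in}})^{\sigma_1} < \infty,
\end{equation*}
and by the monotone convergence theorem,
\begin{equation*}
\lim_{n\to\infty}\mm_{\sigma_1}(\ff_n^{\mathrm{in}}) = \lim_{n\to\infty}\int_0^n x^{\sigma_1}\ff^{\mathrm{in}}(x)\,dx = \mm_{\sigma_1}(\ff^{\mathrm{in}}).
\end{equation*}
Hence there exists $n_0\ge 1$ such that $\mm_{\sigma_1}(\ff_n^{\mathrm{in}}) \ge \mm_{\sigma_1}(\ff^{\mathrm{in}})/2 = \Pi_2$ for all $n\ge n_0$, which combined with monotonicity gives \eqref{a lower bound on lambda_1 moment:eq} on all of $[0,T]$ (noting this bound holds up to $T_n$, and since $\mm_{\sigma_1}$ stays bounded below by a positive constant, control of \eqref{UBZM:eq} coupled with Lemma~\ref{lem:UBOZM} and interpolation makes it consistent with $T_n > T$, but strictly this lemma is a statement on $[0,T]\cap[0,T_n)$).

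I do not expect any real obstacle here; the argument is essentially one line from assumption \eqref{AOM<1b:eq} plus monotone convergence. The only point that requires a little care is the justification for using the unbounded power $x^{\sigma_1}$ as a test function, which is resolved by the compact support of $\ff_n$ as indicated above.
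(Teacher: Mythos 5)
Your proof follows essentially the same route as the paper's: apply the weak formulation with $\tf(x)=x^{\sigma_1}$, use \eqref{AOM<1b:eq} to deduce $\Upsilon_\tf\ge 0$ and hence monotonicity of $\mm_{\sigma_1}(\ff_n(\cdot))$, then choose $n_0$ large so that $\mm_{\sigma_1}(\ff_n^{\mathrm{in}})\ge \mm_{\sigma_1}(\ff^{\mathrm{in}})/2$. Your added care about truncating the unbounded test function via $\tf_R$ (justified by the compact support of $\ff_n$ and $\kk_n$) is a small but legitimate improvement in rigor over the paper, which uses $x^{\sigma_1}$ directly; also note that \eqref{tmassconservation:eq} together with the blow-up criterion \eqref{t_blow_up:eq} already forces $T_n=\infty$, so your caveat about $[0,T]\cap[0,T_n)$ is automatically resolved without the interpolation argument you sketch.
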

		
		\begin{proof}Let $t \in [0,T]$ and $ n_0 \ge 1 $ be sufficiently large so that
			\begin{align}
				\int_{n_0}^\infty x^{{\sigma_1}}\ff^{\mathrm{in}}(x)dx\le \frac{\mm_{{\sigma_1}}(\ff^{\mathrm{in}})}{2}. \label{LB lambda 1:eq}
			\end{align}
			For $t \ge 0$, we set $\tf(x) = x^{\sigma_1}$ for $x\in \mathbb{R}_+$. Then, from \eqref{AOM<1b:eq}, we obtain the estimate for $\Upsilon_{\tf}$ as
			\begin{align*}
				\Upsilon_{\tf}(x,y)&= \int_0^{x+y}z^{{\sigma_1}}\bb(z,x,y)dz-x^{{\sigma_1}}-y^{{\sigma_1}}\\
				&\ge (l_{{\sigma_1}}-1)(x^{{\sigma_1}}+y^{{\sigma_1}}) \ge 0.
			\end{align*}
			
			Using the above estimate along with \eqref{kernel:eq}, \eqref{tkernel:eq} in \eqref{twf:eq}, we get
			\begin{align*}
				\frac{d}{dt}\mm_{{\sigma_1}}(\ff_n(t)) \ge0.
			\end{align*}
			Integrating with respect to time, and using \eqref{LB lambda 1:eq}, we obtain
			\begin{align*}
				\mm_{{\sigma_1}}(\ff_n(t))\ge \mm_{{\sigma_1}}(\ff_n^{\mathrm{in}})\ge \frac{\mm_{{\sigma_1}}(\ff^{\mathrm{in}})}{2}
			\end{align*}
			for $t\ge0$. Thus, the desired result follows.
		\end{proof}

{Next, we examine an estimate for superlinear moments as in \cite[Lemma 3.4]{ali2024} and \cite[Lemma 8.2.41.]{bll2019} with the help of \Cref{lem: uniform lower bound zero moment}.}
		
		\begin{lem} \label{lem:supermoments 1} Let $T\in (0,T_{\gamma,\sigma})$, where $T_{\gamma,\sigma}$ being defined in \eqref{tstar:eq}. Then for $m > 1$ and $T \in (0,T_{\gamma,\sigma})$, there exists a positive constant $\Pi_3(m,T)$ such that
			\begin{align}
				\mm_{m}(\ff_n(t)) \le  \Pi_3(m,T)\left(1+t^{-1}\right)^{\frac{m-1}{{{\sigma_2}}}} \label{SLMB1: eq}
			\end{align}
			for all $t\in[0,T]$ and $n\ge n_0$. In addition, if $\ff^{\mathrm{in}}\in \Xi_m$, there exists a positive constant $\Pi_4(m,T)$ such that
				\begin{align}
				\mm_{m}(\ff_n(t)) \le  \max\{\mm_{m}(\ff^{\mathrm{in}}), \Pi_4(m,T)\} \label{SLMB2:eq}
			\end{align}
				for all $t\in[0,T]$ and $n\ge n_0$.
		\end{lem}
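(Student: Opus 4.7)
The plan is to derive a differential inequality for $y(t) := \mm_m(\ff_n(t))$ of the form
\begin{equation*}
y'(t) \le -A\, y(t)^{1+\sigma_2/(m-1)} + B\, y(t)^{\delta}, \qquad \delta < 1 + \sigma_2/(m-1),
\end{equation*}
from which both bounds will follow by standard ODE comparison. To begin, I would put $\tf(x)=x^m$ into the truncated weak formulation \eqref{twf:eq}. Using \eqref{mth nop_assump:eq} one has
\begin{equation*}
\Upsilon_{\tf}(x,y) \le -\varkappa_m(x^m+y^m) + \varsigma_m(xy^{m-1}+yx^{m-1}),
\end{equation*}
and after inserting the explicit form \eqref{kernel:eq} of $\kk$ and symmetrizing in $(x,y)$, I would obtain
\begin{equation*}
\frac{d}{dt}\mm_m(\ff_n) \le -\kappa\varkappa_m\bigl[\mm_{\sigma_1}(\ff_n)\mm_{m+\sigma_2}(\ff_n)+\mm_{\sigma_2}(\ff_n)\mm_{m+\sigma_1}(\ff_n)\bigr]
+\kappa\varsigma_m\bigl[\mm_{1+\sigma_1}(\ff_n)\mm_{m-1+\sigma_2}(\ff_n)+\mm_{1+\sigma_2}(\ff_n)\mm_{m-1+\sigma_1}(\ff_n)\bigr].
\end{equation*}

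The crucial step is to turn the first bracket into a genuine dissipation on $\mm_{m+\sigma_2}$. Here I would invoke \Cref{lem: uniform lower bound zero moment}, namely $\mm_{\sigma_1}(\ff_n(t))\ge\Pi_2>0$, so the dissipative contribution is at least $\kappa\varkappa_m\Pi_2\,\mm_{m+\sigma_2}(\ff_n)$. For the positive bracket I would interpolate each low moment between $\mm_1(\ff_n)\le\rho$ (the truncated mass conservation \eqref{tmassconservation:eq}) and $\mm_{m+\sigma_2}(\ff_n)$, using the uniform bound $\mm_0(\ff_n)\le\Pi_1(T)$ from \Cref{lem:UBOZM} as a stand-in whenever an exponent drops below $1$. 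A short Hölder computation shows that each of the two products in the positive bracket is bounded by $C(T)\,\mm_{m+\sigma_2}(\ff_n)^{\theta}$ with $\theta<1$, provided $\sigma_1<1$ --- which is precisely the hypothesis imposed in \eqref{kernel:eq}. Combining with Jensen's inequality
\begin{equation*}
\mm_{m+\sigma_2}(\ff_n) \ge \rho^{-\sigma_2/(m-1)}\,\mm_m(\ff_n)^{1+\sigma_2/(m-1)},
\end{equation*}
applied once in the dissipation term and once to convert the $\theta$-power into a power of $y$, leads to the ODE inequality displayed above with constants $A,B$ depending on $\kappa,\sigma_1,\sigma_2,\varkappa_m,\varsigma_m,\rho,\Pi_1(T),\Pi_2$.

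The final step is a classical comparison argument. Setting $K:=(2B/A)^{1/(1+\sigma_2/(m-1)-\delta)}$, whenever $y(t)>K$ the positive term is absorbed by half of the dissipation and $y'\le -(A/2)\,y^{1+\sigma_2/(m-1)}$; integrating this autonomous inequality from any earlier time gives the universal instantaneous bound $y(t)\le (c\,t)^{-(m-1)/\sigma_2}$, independent of the initial datum. Hence in general
\begin{equation*}
\mm_m(\ff_n(t))\le \max\bigl\{K,\,(c\,t)^{-(m-1)/\sigma_2}\bigr\}\le \Pi_3(m,T)\bigl(1+t^{-1}\bigr)^{(m-1)/\sigma_2},
\end{equation*}
which yields \eqref{SLMB1: eq}. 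If additionally $\ff^{\mathrm{in}}\in \Xi_m$, then $y(0)<\infty$ and the threshold argument prevents $y$ from ever exceeding $\max\{y(0),K\}$, giving \eqref{SLMB2:eq} with $\Pi_4(m,T)=K$.

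\textbf{Main obstacle.} The delicate point is the second paragraph: showing that the positive bracket is controlled by $\mm_{m+\sigma_2}^{\theta}$ with $\theta<1$ uniformly in $m>1$. When $m$ is close to $1$ and $\sigma_2$ is small, some of the intermediate moments $\mm_{m-1+\sigma_i}$ have index below $1$, so one cannot interpolate between $\mm_1$ and $\mm_{m+\sigma_2}$ directly and must instead use the a priori bound on $\mm_0$ from \Cref{lem:UBOZM}. A short case split on whether $m-1+\sigma_2\ge 1$ handles this, and in every case the strict inequality $\sigma_1<1$ is exactly what provides the margin $\theta<1$ needed to close the estimate.
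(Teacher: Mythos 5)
Your proposal is correct and follows essentially the same strategy as the paper's proof: test with $\tf(x)=x^m$, use \eqref{mth nop_assump:eq} and the lower bound $\mm_{\sigma_1}(\ff_n)\ge\Pi_2$ from \Cref{lem: uniform lower bound zero moment} to extract dissipation $\kappa\varkappa_m\Pi_2\,\mm_{m+\sigma_2}(\ff_n)$, interpolate the positive bracket to get $\mm_{m+\sigma_2}^{\theta}$ with $\theta<1$ (where $\sigma_1<1$ supplies the margin), close via $\mm_{m+\sigma_2}\ge\rho^{-\sigma_2/(m-1)}\mm_m^{1+\sigma_2/(m-1)}$, and compare against an explicit supersolution of the form $(R_1+R_2 t^{-1})^{(m-1)/\sigma_2}$. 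The only cosmetic difference is that the paper uses Young's inequality to reduce the positive term to a constant before integrating, and it sidesteps the case split you flag as the ``main obstacle'' by always interpolating $\mm_{m+\sigma_j-1}$ between $\mm_{\sigma_j}$ and $\mm_{m+\sigma_2}$ (rather than between $\mm_1$ and $\mm_{m+\sigma_2}$), so the lower endpoint is always admissible.
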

		
		\begin{proof}
Let $t \in [0,T]$, $m>1$ and $\tf( x)= x^m$ for $ x\in\mathbb{R}_+$. Then, from \eqref{kernel:eq}, \eqref{mth nop_assump:eq}, \eqref{tkernel:eq} and \eqref{twf:eq}, we get
			\begin{align}
				\frac{d}{dt}\mm_m(\ff_n(t))\le & \int_{0}^{\infty}\int_{0}^{\infty}[\varsigma_m x y^{m-1}-\varkappa_m x^m] \kk_n( x, y)\ff_n (t, x)\ff_n (t, y)d y d x \nonumber\\
				\le& \kappa\varsigma_m[\mm_{1+{{\sigma_1}}}(\ff_n(t))\mm_{m+{{\sigma_2}}-1}(\ff_n(t))+\mm_{1+{{\sigma_2}}}(\ff_n(t))\mm_{m+{{\sigma_1}}-1}(\ff_n(t))] \nonumber \\ &-\kappa \varkappa_m\mm_{m+{{\sigma_2}}}(\ff_n(t))\mm_{{{\sigma_1}}}(\ff_n(t)). \label{SLMB me:eq}
			\end{align}
		We now proceed to estimate the upper bounds of the following moments using \eqref{tmassconservation:eq}, \eqref{UBZM:eq} and H\"older's inequality. Specifically, we obtain
			\begin{align*}
				\mm_{1+{{\sigma_1}}}(\ff_n(t))&\le \rho^{\frac{m+{{\sigma_2}}-1-{{\sigma_1}}}{m+{{\sigma_2}}-1}}\mm_{m+{{\sigma_2}}} (\ff_n(t))^{\frac{{{\sigma_1}}}{m+{{\sigma_2}}-1}},\\
				\mm_{m+{{\sigma_2}}-1}(\ff_n(t))&\le \mm_{{{\sigma_2}}}(\ff_n(t))^{\frac{1}{m}}\mm_{m+{{\sigma_2}}}(\ff_n(t))^{\frac{m-1}{m}}\\
				&\le\Pi_1(T)^{\frac{1-{{\sigma_2}}}{m}}\rho^{\frac{{\sigma_2}}{m}}\mm_{m+{{\sigma_2}}}(\ff_n(t))^{\frac{m-1}{m}},\\
				\mm_{1+{{\sigma_2}}}(\ff_n(t))&\le \rho^{\frac{m-1}{m+{{\sigma_2}}-1}}\mm_{m+{{\sigma_2}}}(\ff_n(t))^{\frac{{{\sigma_2}}}{m+{{\sigma_2}}-1}},\\
				\mm_{m+{{\sigma_1}}-1}(\ff_n(t))&\le \mm_{{{\sigma_1}}}(\ff_n(t))^{\frac{1+{{\sigma_2}}-{{\sigma_1}}}{m+{{\sigma_2}}-{{\sigma_1}}}}\mm_{m+{{\sigma_2}}}(\ff_n(t))^{\frac{m-1}{m+{{\sigma_2}}-{{\sigma_1}}}}\\
				&\le \Pi_1(T)^{\frac{(1-{\sigma_1})(1+{{\sigma_2}}-{{\sigma_1}})}{m+{{\sigma_2}}-{{\sigma_1}}}}\rho^{\frac{{\sigma_1}(1+{{\sigma_2}}-{{\sigma_1}})}{m+{{\sigma_2}}-{{\sigma_1}}}}\mm_{m+{{\sigma_2}}}(\ff_n(t))^{\frac{m-1}{m+{{\sigma_2}}-{{\sigma_1}}}}.
				
			\end{align*}
			
			Finally, we obtain for some positive constant $\Pi_5(m,T)$
			\begin{align}
				\mm_{1+{{\sigma_1}}}(\ff_n(t)) \mm_{m+{{\sigma_2}}-1}(\ff_n(t))  \le& \Pi_5(m,T) \mm_{m+{{\sigma_2}}}(\ff_n(t))^{\gamma_1},\\
				\mm_{1+{{\sigma_2}}}(\ff_n(t)) \mm_{m+{{\sigma_1}}-1}(\ff_n(t))  \le& \Pi_5(m,T) \mm_{m+{{\sigma_2}}}(\ff_n(t))^{\gamma_2},
			\end{align}
			where 
			\begin{align*}
				\gamma_1&= \frac{(m-1)(m-1+{{\sigma_2}})+m{{\sigma_1}}}{m(m+{{\sigma_2}}-1)}>0,\\
				\gamma_2 &=\frac{(m+{{\sigma_2}}-1)^2 +{{\sigma_2}}(1-{{\sigma_1}})}{(m+{{\sigma_2}}-1)(m+{{\sigma_2}}-{{\sigma_1}})}>0.
			\end{align*}
			We note that, since $m>1$, ${{\sigma_2}}\ge {{\sigma_1}}$ and ${{\sigma_1}}<1$ 
			\begin{align*}
				1-\gamma_1 =& \frac{m+{{\sigma_2}}-1-m{{\sigma_1}}}{ m(m+{{\sigma_2}}-1)}\ge \frac{m(1-{{\sigma_1}})+ {{\sigma_1}}-1}{m(m+{{\sigma_2}}-1)}= \frac{(m-1)(1-{{\sigma_1}})}{m(m-{{\sigma_2}}+1)}>0,\\
				1-\gamma_2=& \frac{(m-1)(1-{{\sigma_1}})}{(m+{{\sigma_2}}-1)(m+{{\sigma_2}}-{{\sigma_1}})}>0.
			\end{align*}
			With the help of \eqref{a lower bound on lambda_1 moment:eq}, \eqref{SLMB me:eq} and above estimates, we obtain
			\begin{equation*}
				\frac{d}{dt} \mm_m(\ff_n(t))\le  \kappa \Pi_5(m,T)[\mm_{m+{{\sigma_2}}}(\ff_n(t))^{\gamma_1} +\mm_{m+{{\sigma_2}}}(\ff_n(t))^{\gamma_2}]-\kappa \varkappa_m \Pi_2\mm_{m+{{\sigma_2}}}(\ff_n(t)).
			\end{equation*}
			Since $(\gamma_1,\gamma_2)\in (0,1)^2$, we deduce from Young's inequality that there is $\Pi_6(m,T)$ such that
			\begin{equation}
				\frac{d}{dt} \mm_m(\ff_n(t))\le \Pi_6(m,T) -\frac{1}{2}\kappa \varkappa_m \Pi_2\mm_{m+{{\sigma_2}}}(\ff_n(t)). \label{SLMB3:eq}
			\end{equation}
			Since $m \in [1,m+{{\sigma_2}}]$, we use  H\"{o}lder's inequality to show that
			\begin{equation*}
				\mm_m(\ff_n(t)) \le \mm_1(\ff_n(t))^{\frac{{{\sigma_2}}}{m+{{\sigma_2}}-1}} \mm_{m+{{\sigma_2}}}(\ff_n(t))^{\frac{m-1}{m+{{\sigma_2}}-1}} \le \rho^{\frac{{{\sigma_2}}}{m+{{\sigma_2}}-1}} \mm_{m+{{\sigma_2}}}(\ff_n(t))^{\frac{m-1}{m+{{\sigma_2}}-1}}.
			\end{equation*}  
			Hence ,
			\begin{equation}
				\rho^{-\frac{{{\sigma_2}}}{m-1}} \mm_m(\ff_n(t))^{\frac{m+{{\sigma_2}}-1}{m-1}} \le  \mm_{m+{{\sigma_2}}}(\ff_n(t)).
			\end{equation}
			
						Using the above estimate in \eqref{SLMB3:eq}, we get for some positive constant {$\Pi_7(m)$} such that
					    \begin{align}
								\frac{d}{dt}\mm_m(\ff_n(t)) +{\Pi_7(m)} \mm_{m}(\ff_n(t))^{\frac{m+{{\sigma_2}}-1}{m-1}}\le \Pi_{6}(m,T), \qquad {t\in(0,T)}, \label{SLMB4:eq}
					     \end{align}
			where $${\Pi_7(m):=\frac{1}{2}\kappa \varkappa_m\Pi_2\rho^{\frac{-\sigma_2}{m-1}}.}$$

			Introducing 
			\begin{equation*}
				X(t) = \Big(R_1 + R_2 t^{-1}\Big)^{\frac{m-1}{{{\sigma_2}}}}, \qquad t>0,
			\end{equation*}
			with 
			\begin{equation*} 
				R_2 = \frac{m}{{{\sigma_2}} {\Pi_7(m)}}, \qquad R_1 = \Bigg( \frac{m\Pi_6(m,T)}{{\Pi_7(m)}} \Bigg)^{\frac{{{\sigma_2}}}{m+{{\sigma_2}}-1}}.
			\end{equation*}
		A straightforward calculation reveals that $X$ serves as a supersolution to \eqref{SLMB4:eq} such that 
		$$\lim_{t\to 0}X(t)=+\infty.$$
		 Consequently, the comparison principle indicates that $\mm_m(\ff_n(t)) \leq X(t)$ for $t \in(0,T)$, which establishes
			\begin{equation}
				\mm_m(\ff_n(t)) \le \Pi_3(m,T) \Big(1+t^{-1}\Big)^{\frac{m-1}{{{\sigma_2}}}}, \qquad t\in(0,T),
			\end{equation}
			for $m>1$  with $\Pi_3(m,T)= \max\big\{R_1, R_2\big\}^{\frac{m-1}{{{\sigma_2}}}}.$
			We also observe that \eqref{tin:eq}, \eqref{SLMB4:eq} and the comparison principle imply \eqref{SLMB2:eq}.
			
		\end{proof}

Note that the estimate \eqref{SLMB1: eq} established in \Cref{lem:supermoments 1} exhibits a singularity at $t = 0$, while the estimate \eqref{SLMB2:eq} avoids such singularities but require more integrability on $\ff^{\mathrm{in}}$, i.e. $\ff^{\mathrm{in}}\in \Xi_m$. To address these limitations, we derive a more refined estimate for superlinear moments {as in \cite[Lemma 3.6]{ali2024} and \cite[Lemma 8.2.42]{bll2019}} by applying a variant of de la Vall\'ee Poussin theorem (see \cite[Theorem 2.8]{phl2014}) in conjunction with the estimate \eqref{SLMB1: eq} from \Cref{lem:supermoments 1}. Since $\ff^{\mathrm{in}}$ satisfies \eqref{in_0 and 1:eq}, the de la Vall\'ee Poussin theorem ensures the existence of a convex function $\psi \in \mathcal{C}^2([0, \infty))$ such that $\psi'$ is a positive concave function with $\psi(0) = \psi'(0) = 0$.

\begin{align}
	\mm_{\psi}(\ff^{\mathrm{in}}):= \int_0^\infty \psi(x) \ff^{\mathrm{in}}(x) \, dx < \infty.
\end{align}
In addition, $\psi$ satisfies
\begin{align}
		( x+ y) \left[ \psi( x+ y) -\psi( x) - \psi( y) \right] \le 2 \left[  x \psi( y) +  y \psi( x) \right]\,, \qquad ( x, y)\in \mathbb{R}_+^2, \label{sm3}
\end{align}
and a function $\psi_1$ defined as,
\begin{align}
 \psi_1(0) = 0,   \quad 		\text{and} \quad \psi_1( x) :=\frac{\psi( x)}{x} \quad \text{for}~x\in\mathbb{R}_+, \label{monotonicity:eq}
\end{align}
is concave and non-decreasing (see \cite[Proposition 2.14]{phl2014}).

	\begin{lem} \label{lem:refine supermoments}
	For any $ T \in (0, T_{\gamma,\sigma}) $, where $ T_{\gamma,\sigma} $ is defined in \eqref{tstar:eq}, there exists a constant $ \Pi_8(T) > 0 $ such that
		\begin{align}
			\mm_{\psi}(\ff_n(t)) \leq \Pi_8(T), \quad t \in [0,T], ~~n\ge n_0. \label{phi bound:eq}
		\end{align}
	\end{lem}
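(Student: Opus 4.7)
The plan is to test the truncated equation \eqref{twf:eq} with the choice $\tf=\psi$, which is admissible because $\ff_n(t,\cdot)$ is supported in $(0,n)$ so $\psi$ restricted to this support is bounded (equivalently, one may work first with $\psi\,\mathbf{1}_{(0,N)}$ and pass to the limit $N\to\infty$). This yields a differential inequality for $\mm_\psi(\ff_n(t))$ that can be closed by Gronwall.

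First, I would estimate $\Upsilon_\psi(x,y)$. Writing $\psi(z)=z\psi_1(z)$, the monotonicity of $\psi_1$ from \eqref{monotonicity:eq} together with the local conservation \eqref{localc:eq} gives
\begin{equation*}
\int_0^{x+y}\psi(z)\bb(z,x,y)\,dz=\int_0^{x+y} z\psi_1(z)\bb(z,x,y)\,dz\le\psi_1(x+y)\int_0^{x+y}z\,\bb(z,x,y)\,dz=\psi(x+y).
\end{equation*}
Combining this with \eqref{sm3}, I obtain $\Upsilon_\psi(x,y)\le \psi(x+y)-\psi(x)-\psi(y)\le 2[x\psi(y)+y\psi(x)]/(x+y)$. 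Substituting into \eqref{twf:eq} and exploiting the symmetry of $\kk_n$ in $(x,y)$ yields
\begin{equation*}
\frac{d}{dt}\mm_\psi(\ff_n(t))\le 2\int_0^\infty\!\!\int_0^\infty\frac{xy\,\psi_1(y)}{x+y}\,\kk(x,y)\,\ff_n(t,x)\,\ff_n(t,y)\,dx\,dy.
\end{equation*}

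The key step is the decoupling of this bilinear form into a product $\mm_\sigma\cdot\mm_\psi$. Young's inequality $x^a y^{1-a}\le x+y$ for $a\in[0,1]$ gives $xy/(x+y)\le x^{1-s}y^{s}$ for every $s\in[0,1]$. Selecting $s=1-\sigma_2$ in the contribution from $\kappa x^{\sigma_1}y^{\sigma_2}$ and $s=1-\sigma_1$ in the contribution from $\kappa x^{\sigma_2}y^{\sigma_1}$ (both admissible because $\sigma_1,\sigma_2\in[0,1]$), each integrand collapses to $\kappa x^\sigma\psi(y)$, and hence
\begin{equation*}
\frac{d}{dt}\mm_\psi(\ff_n(t))\le 4\kappa\,\mm_\sigma(\ff_n(t))\,\mm_\psi(\ff_n(t)).
\end{equation*}

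It then remains to show that $s\mapsto\mm_\sigma(\ff_n(s))$ is integrable on $[0,T]$ uniformly in $n$. For $\sigma\in[0,1]$, H\"older interpolation between \Cref{lem:UBOZM} and the mass bound $\mm_1(\ff_n)\le\rho$ yields a time-uniform bound. For $\sigma\in(1,2)$, the interpolation $\mm_\sigma\le\mm_1^{2-\sigma}\mm_2^{\sigma-1}$ combined with \Cref{lem:supermoments 1} at $m=2$ delivers $\mm_\sigma(\ff_n(t))\le C(T)(1+t^{-1})^{(\sigma-1)/\sigma_2}$. The structural hypothesis $\sigma_1<1$ from \eqref{kernel:eq} is exactly equivalent to $(\sigma-1)/\sigma_2<1$, which guarantees integrability of this bound near $t=0$. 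Gronwall's lemma then produces $\mm_\psi(\ff_n(t))\le\mm_\psi(\ff^{\mathrm{in}})\exp\!\bigl(4\kappa\int_0^T\mm_\sigma(\ff_n(s))\,ds\bigr)=:\Pi_8(T)$. The principal obstacle is pinning down the right Young exponents so that the double integral collapses onto $\mm_\sigma\cdot\mm_\psi$; the integrability of $\mm_\sigma$ at $t=0$ is precisely where the standing assumption $\sigma_1\neq 1$ enters.
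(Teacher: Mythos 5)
Your proposal is correct and follows essentially the same route as the paper: you bound $\Upsilon_\psi(x,y)$ by $\psi(x+y)-\psi(x)-\psi(y)$ using the growth of $\psi_1$ and the inequality \eqref{sm3}, collapse the bilinear form to $C\,\mm_\sigma(\ff_n)\,\mm_\psi(\ff_n)$, and close with Gronwall using the time-integrability of $\mm_\sigma$ from interpolation (for $\sigma\le 1$) or from \Cref{lem:supermoments 1} (for $\sigma>1$), with $\sigma_1<1$ being exactly what keeps the $t\to 0$ singularity integrable. Your derivation of $\int_0^{x+y}\psi(z)\bb(z,x,y)\,dz\le\psi(x+y)$ uses only the monotonicity of $\psi_1$ where the paper invokes Jensen's inequality, and you use Young's inequality $xy/(x+y)\le x^{1-s}y^s$ where the paper bounds $x^{1+\sigma_1}(x+y)^{\sigma_2-1}\le x^\sigma$ directly, but these are minor variations in the same computation rather than a different method.
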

	
		\begin{proof} First, applying Jensen’s inequality along with \eqref{localc:eq}, \eqref{monotonicity:eq}, and the fact that $\psi_1$ is a non-decreasing concave function, we obtain
			\begin{align*}
				\int_0^{ x+ y} \psi( z) \bb( z, x, y)\ d z & = ( x+ y) \int_0^{ x+ y} \psi_1( z) \frac{ z \bb( z, x, y)}{ x+ y}\ d z \\
				& \le ( x+ y) \psi_1 \left( \int_0^{ x+ y}  z \frac{ z \bb( z, x, y)}{ x+ y}\ d z \right) \\
				& \le ( x+ y) \psi_1 \left( \int_0^{ x+ y}  z \bb( z, x, y)\ d z \right) \\
				& = ( x+ y) \psi_1( x+ y) = \psi( x+ y)\,,
			\end{align*}
		for $( x ,  y ) \in \mathbb{R}_+^2$. As a result, we have
			\begin{align}
				\Upsilon_\psi( x, y) & \le \psi( x+ y) -\psi( x) - \psi( y). \label{sm1}
			\end{align}

Let $t \in [0,T]$. It follows from \eqref{kernel:eq}, \eqref{sm3} and  \eqref{sm1} that
			
			\begin{align*}
				\frac{d}{dt} \mm_\psi(\ff_n(t))  \le& \frac{1}{2} \int_0^\infty \int_0^\infty \left[ \psi(x+y)-\psi(x) -\psi( y) \right] \kk( x, y) \ff_n(t, x) \ff_n(t, y) d y d x \\
				\le&
				2\kappa \int_0^\infty \int_0^\infty\bigg(\frac{2 x}{ x+ y}\psi( y)+\frac{2 y}{ x+ y}\psi( x)\bigg) x^{{\sigma_1}} y^{{\sigma_2}}\ff_n(t, x) \ff_n(t, y) d y d x\\
				\le& 
				4\kappa \int_0^\infty \int_0^\infty\bigg[\frac{ x^{1+{{\sigma_1}}}}{( x+ y)^{1-{{\sigma_2}}}}\psi(y)+\frac{ y^{1+{{\sigma_2}}}}{( x+ y)^{1-{{\sigma_1}}}}\psi(x)
				\bigg]\ff_n(t, x) \ff_n(t, y) d y d x\\
				\le& 
				4\kappa \int_0^\infty \int_0^\infty\left[x^\sigma \psi(y)+y^\sigma \psi(x)
				\right]\ff_n(t, x) \ff_n(t, y) d y d x.
			\end{align*}
			Finally, we obtain
			\begin{align*}
				\frac{d}{dt} \mm_\psi(\ff_n(t))  \le& 8\kappa \mm_\sigma(\ff_n(t))\mm_\psi(\ff_n(t)).
			\end{align*}
		Integrating with respect to time, we get
			\begin{align}
				\mm_\psi(\ff_n(t))\le \mm_\psi(\ff_n^{\mathrm{in}})\exp \left(8\kappa \int_0^t \mm_\sigma(\ff_n(s))ds\right). \label{convex proof 1}
			\end{align}

				Since, the estimate obtained in \eqref{convex proof 1} depends on the range of $\sigma$, we will address this in two separate cases:
			\begin{enumerate}
				\item If $\sigma \in  [0,1]$, then by H\"older's inequality 
				\begin{align}
				\int_0^t\mm_\sigma(\ff_n(s))ds \le \Pi_1(T)^{1-\sigma}\rho^{\sigma}T. \label{refine supermoment 1:eq}
				\end{align}
				
				\item If $\sigma \in (1,1+\sigma_2)$, then by using \Cref{lem:supermoments 1} with $m=\sigma<1+\sigma_2$, we get
				\begin{align}
				\int_0^t\mm_\sigma(\ff_n(s))ds &\le \Pi_3(\sigma, T)\int_0^t  \left(1+s^{-1}\right)^{\frac{\sigma-1}{{{\sigma_2}}}}ds\le \Pi_3(\sigma, T)\int_0^t  \left(1+ s^{\frac{1-\sigma}{{{\sigma_2}}}}\right)ds \nonumber\\
				& \le \Pi_3(\sigma, T)\left(T+\frac{{\sigma_2}}{1-\sigma+{\sigma_2}}T^\frac{1-\sigma+{\sigma_2}}{{\sigma_2}}\right). \label{refine supermoment 2:eq}
			\end{align}
			\end{enumerate}
	Combining \eqref{convex proof 1}, \eqref{refine supermoment 1:eq} and \eqref{refine supermoment 2:eq} ends the proof of \Cref{lem:refine supermoments}.
		\end{proof}

	
	The next step is to establish the uniform integrability estimate.
		
	\begin{lem} \label{lem:ui}
		Let $\delta>0$ and  $T \in (0, T_{\gamma, \sigma}) $, where $ T_{\gamma, \sigma} $ is defined in $ \eqref{tstar:eq} $, there is $\Pi_9(T)>0$ such that
	\begin{align}
			{P_{n}(t,\delta)} \le   \Pi_9(T)\left[P^{\mathrm{in}}(\delta) +\eta(\delta)\right] \,, \quad t\in [0,T], ~n\ge1\,, \label{ui estimate:eq}
		\end{align} 
		where $P_{n}$ and $P^{\mathrm{in}}$ defined as
			\begin{equation*}
		{	P_{n}(t,\delta)} := \sup\left\{ \int_E\ff_n(t,x)\ dx\ :\ {E\subset \mathbb{R}_+}\,, \ |E|\le \delta \right\}, \quad t\in[0,T],~~~n\ge 1,
		\end{equation*} 
		and
		\begin{equation*}
			P^{\mathrm{in}}(\delta) := \sup\left\{ \int_E \ff^{\mathrm{in}}( x)\ d x\ :\ E\subset \mathbb{R}_+\,, \ |E|\le \delta \right\}\,,
		\end{equation*} 
		respectively, and $\eta$ is defined in assumption \eqref{ui_1_assump:eq}.
	\end{lem}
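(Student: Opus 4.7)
The plan is to apply the weak formulation \eqref{twf:eq} with the indicator test function $\tf = \mathbf{1}_E$ for an arbitrary measurable set $E \subset (0,J)$ with $|E|\le\delta$, extract a differential inequality for $\int_E \ff_n(t,x)\,dx$ whose right-hand side is controlled by $\eta(\delta)$ times uniformly bounded sub-linear moments of $\ff_n$, and then integrate in time and take the supremum over admissible sets $E$.

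Setting $\tf = \mathbf{1}_E$ in \eqref{twf:eq} and bounding the gain contribution in $\Upsilon_{\mathbf{1}_E}$ by means of \eqref{ui_2_assump:eq} while discarding the two non-positive terms $-\mathbf{1}_E(x)-\mathbf{1}_E(y)$ yields the pointwise estimate
\begin{align*}
\Upsilon_{\mathbf{1}_E}(x,y) \le \eta(\delta)\bigl(x^{-\alpha}+y^{-\alpha}\bigr).
\end{align*}
Combining this with $\kk_n\le \kk$ and the explicit form \eqref{kernel:eq}, and then symmetrising in $(x,y)$ using $\kk(x,y)=\kk(y,x)$, I arrive at
\begin{align*}
\frac{d}{dt}\int_E \ff_n(t,x)\,dx \le \kappa\,\eta(\delta)\Bigl[\mm_{\sigma_1-\alpha}(\ff_n(t))\,\mm_{\sigma_2}(\ff_n(t)) + \mm_{\sigma_2-\alpha}(\ff_n(t))\,\mm_{\sigma_1}(\ff_n(t))\Bigr].
\end{align*}

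Since the standing assumption of this subsection forces $\alpha\in(0,\sigma_1)$ together with $\sigma_1\le\sigma_2\le 1$, each of the exponents $\sigma_1-\alpha,\ \sigma_2-\alpha,\ \sigma_1,\ \sigma_2$ lies in $(0,1]$, so every one of the four moments above can be interpolated between $\mm_0$ and $\mm_1$ via H\"older's inequality and then bounded uniformly in $n$ and in $t\in[0,T]$ by means of \Cref{lem:UBOZM} and the truncated mass conservation \eqref{tmassconservation:eq}. This produces a constant $C(T)>0$, independent of $n$ and of $E$, with
\begin{align*}
\frac{d}{dt}\int_E \ff_n(t,x)\,dx \le C(T)\,\eta(\delta).
\end{align*}
Integrating over $[0,t]$, using $\ff_n^{\mathrm{in}}\le \ff^{\mathrm{in}}$ together with $|E|\le\delta$ to dominate the initial contribution by $P^{\mathrm{in}}(\delta)$, and then taking the supremum over admissible sets $E\subset(0,J)$ delivers \eqref{ui estimate:eq} with $\Pi_9(T):=\max\{1,\, C(T)\,T\}$.

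I do not anticipate a substantial obstacle here: the whole argument rests on exploiting the uniform integrability assumption \eqref{ui_2_assump:eq} on the breakage kernel, and the only mildly technical point is ensuring that the negative-power factors $x^{-\alpha}$, $y^{-\alpha}$ produced by that bound combine with the kernel \eqref{kernel:eq} into \emph{non-negative} exponents, which is precisely what the hypothesis $\alpha<\sigma_1$ guarantees. The restriction $E\subset(0,J)$ plays no role in deriving the estimate itself (the bound actually holds for every measurable $E\subset\mathbb{R}_+$ with $|E|\le\delta$) and is kept only because the subsequent Dunford--Pettis-type compactness argument requires uniform integrability on bounded size intervals.
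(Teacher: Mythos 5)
Your proposal is correct and follows essentially the same path as the paper: test \eqref{twf:eq} with $\mathbf{1}_E$, discard the non-positive loss terms, apply \eqref{ui_2_assump:eq} together with the monotonicity of $\eta$ to bound $\Upsilon_{\mathbf{1}_E}$ by $\eta(\delta)(x^{-\alpha}+y^{-\alpha})$, symmetrise against the kernel \eqref{kernel:eq}, interpolate the resulting moments of order $\sigma_1-\alpha,\sigma_2-\alpha,\sigma_1,\sigma_2\in(0,1]$ between $\mm_0$ and $\mm_1$ using \Cref{lem:UBOZM} and \eqref{tmassconservation:eq}, and integrate in time. Your observation that the bound in fact holds for arbitrary finite-measure $E\subset\mathbb{R}_+$, the restriction to $(0,J)$ being imposed only for the later Dunford--Pettis step, is also accurate.
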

		
		\begin{proof}
			Let $t\in [0,T]$, $\delta>0$ and a measurable subset $E\subset (0,\infty)$ with finite measure $|E|\le \delta$ and $\tf_E(x) :=  \mathbf{1}_E(x)$ for $x\in \mathbb{R}_+$. Using \eqref{upsilon:eq} and \eqref{ui_2_assump:eq}, we obtain
			\begin{align}
				\Upsilon_{\tf_E}( x, y) \le \eta(|E|) \left(  x^{-\alpha} + y^{-\alpha} \right), \qquad  ( x, y)\in \mathbb{R}_+^2, \label{ui 23:eq}
			\end{align}
			which is holds for $\alpha\in(0,1)$. Since {$\alpha\in(0,{\sigma_1}]$}, it follows from  \eqref{tmassconservation:eq}, \eqref{twf:eq}, \eqref{UBZM:eq} and \eqref{ui 23:eq} that
			
				\begin{align}
				\frac{d}{dt} \int_0^\infty \tf_E( x) & \ff_n(t,x)\ d x   \le   \frac{\eta(\delta)}{2} \int_0^\infty \int_0^\infty \left(  x^{-\alpha} +  y^{-\alpha} \right) \kk_n( x, y) \ff_n(t, x) \ff_n(t, y)\ d y d x \nonumber\\
				\le& \kappa \eta(\delta) \int_0^\infty \int_0^\infty \left(  x^{-\alpha} +  y^{-\alpha} \right)x^{{\sigma_1}} y^{{\sigma_2}} \ff_n(t, x) \ff_n(t, y)\ d y d x \nonumber \\
				\le & \kappa \eta(\delta)[ \mm_{\sigma_1-\alpha}(\ff_n(t))\mm_{\sigma_2}(\ff_n(t))+\mm_{\sigma_2-\alpha}(\ff_n(t))\mm_{\sigma_1}(\ff_n(t))].  \label{ui me: eq}
			\end{align}
			
			Since $0\le \sigma_1\le \sigma_2\le 1$ and $0\le \sigma_1-\alpha\le \sigma_2-\alpha<1$, we obtain the following estimates using \eqref{tmassconservation:eq}, \eqref{UBZM:eq} and H\"older's inequality
			\begin{align*}
				\mm_{{\sigma_1}}(\ff_n(t))&\le \mm_0(\ff_n(t))^{1-\sigma_1}\mm_1(\ff_n(t))^{\sigma_1} \le \Pi_1(T)^{1-{{\sigma_1}}}\rho^{{\sigma_1}}, \\
				\mm_{{\sigma_2}}(\ff_n(t))&\le \mm_0(\ff_n(t))^{1-\sigma_2}\mm_1(\ff_n(t))^{\sigma_2}\le \Pi_1(T)^{1-{{\sigma_2}}}\rho^{{\sigma_2}}, \\
				\mm_{{{\sigma_1}}-\alpha}(\ff_n(t))&\le \mm_{0}(\ff_n(t))^{1+\alpha-\sigma_1}\mm_1(\ff_n(t))^{\sigma_1-\alpha} \le \Pi_1(T)^{1+\alpha-{\sigma_1}}\rho^{\sigma_1-\alpha} ,\\
				\mm_{{{\sigma_2}}-\alpha}(\ff_n(t))&\le \mm_{0}(\ff_n(t))^{1+\alpha-\sigma_2}\mm_1(\ff_n(t))^{\sigma_2-\alpha} \le \Pi_1(T)^{1+\alpha-{\sigma_2}}\rho^{\sigma_2-\alpha} .
			\end{align*}
			Using above estimates in \eqref{ui me: eq}, we get
			
				\begin{align*}
				\frac{d}{dt} \int_0^\infty \tf_E( x)\ff_n(t,x)\ d x \le 2\kappa \eta(\delta) \Pi_1(T)^{2+\alpha-\sigma}\rho^{\sigma-\alpha}.
			\end{align*}

			By integrating with respect to time and taking the supremum over all measurable subsets {$E \subset (0, \infty)$} with finite measure $|E| \leq \delta$, we obtain
			\begin{equation*}
			{	P_{n}(t,\delta)} \le {P_{n}(0,\delta)} +2t\kappa \eta(\delta)\Pi_1(T)^{2+\alpha-\sigma}\rho^{\sigma-\alpha}.
			\end{equation*}
		Note that
						\begin{equation}
								{P_{n}(0,\delta)} \le P^{\mathrm{in}}(\delta)\,. \label{uniform int initial data:eq}
					\end{equation}
			Therefore, by \eqref{uniform int initial data:eq}, 
			\begin{align*}
			{	P_{n}(t,\delta)} \le P^{\mathrm{in}}(\delta) + 2T\kappa \eta(\delta)\Pi_1(T)^{2+\alpha-\sigma}\rho^{\sigma-\alpha},
			\end{align*} 
			which completes the proof of the \Cref{lem:ui}.
		\end{proof}
		
Next, we establish the time equicontinuity of the sequence $(\ff_n)_{n \geq 1}$.
		\begin{lem} \label{lem:equicontinuity}
			For any $T\in (0,T_{\gamma,\sigma})$, where $T_{\gamma,\sigma}$ being defined in \eqref{tstar:eq} , there is {$\Pi_{10}(T)>0$} such that
			\begin{align}
				\int_0^\infty |\ff_n(t, x)-\ff_n(s, x)|dx \le \Pi_{10}(T)|t-s|, \qquad (t,s)\in[0,T]^2,~ n\ge1.
			\end{align}
		\end{lem}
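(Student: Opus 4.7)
The plan is to establish the equicontinuity estimate through a duality argument based on the weak formulation \eqref{twf:eq}, exploiting the uniform bound $|\Upsilon_\tf(x,y)| \le (\gamma+2)\|\tf\|_{L^\infty(\mathbb{R}_+)}$ from \eqref{upsilon_bound:eq} together with the zeroth moment bound from \Cref{lem:UBOZM} and truncated mass conservation \eqref{tmassconservation:eq}.

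First, I would use the $L^1$-$L^\infty$ duality to write
\begin{equation*}
\int_0^\infty |\ff_n(t,x)-\ff_n(s,x)|\,dx = \sup\left\{ \int_0^\infty \tf(x)\bigl(\ff_n(t,x)-\ff_n(s,x)\bigr)\,dx \; : \; \tf \in L^\infty(\mathbb{R}_+),\ \|\tf\|_{L^\infty(\mathbb{R}_+)} \le 1 \right\}.
\end{equation*}
For $0 \le s \le t \le T$ and any such $\tf$, integrating the identity \eqref{twf:eq} from $s$ to $t$ gives
\begin{equation*}
\int_0^\infty \tf(x)\bigl(\ff_n(t,x)-\ff_n(s,x)\bigr)\,dx = \frac{1}{2}\int_s^t \int_0^\infty \int_0^\infty \Upsilon_\tf(x,y)\kk_n(x,y)\ff_n(\tau,x)\ff_n(\tau,y)\,dy\,dx\,d\tau.
\end{equation*}

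Next, I would apply \eqref{upsilon_bound:eq} with $\|\tf\|_{L^\infty(\mathbb{R}_+)}\le 1$, together with the pointwise bound $\kk_n \le \kk$ from \eqref{tkernel:eq} and the structural form \eqref{kernel:eq}, to estimate
\begin{equation*}
\left| \int_0^\infty \tf(x)\bigl(\ff_n(t,x)-\ff_n(s,x)\bigr)\,dx \right| \le \kappa(\gamma+2)\int_s^t \mm_{\sigma_1}(\ff_n(\tau))\,\mm_{\sigma_2}(\ff_n(\tau))\,d\tau.
\end{equation*}

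Finally, since $0 \le \sigma_1 \le \sigma_2 \le 1$, H\"older's inequality combined with \eqref{tmassconservation:eq} and \eqref{UBZM:eq} yields $\mm_{\sigma_i}(\ff_n(\tau)) \le \Pi_1(T)^{1-\sigma_i}\rho^{\sigma_i}$ for $i \in \{1,2\}$ and $\tau \in [0,T]$, so the integrand is bounded uniformly in $\tau$ and $n$. Taking the supremum over admissible $\tf$ and setting $\Pi_{10}(T) := \kappa(\gamma+2)\Pi_1(T)^{2-\sigma}\rho^\sigma$ gives the claim for $s \le t$; the symmetric case is identical. The argument involves no real obstacle beyond assembling the previously established uniform moment bounds; the role of the case distinction $\sigma \in [0,1)$ versus $\sigma \in [1,2)$ is implicit through the definition of $T_{\gamma,\sigma}$ in \eqref{tstar:eq}, which is exactly what guarantees the finiteness of $\Pi_1(T)$ on the chosen time interval.
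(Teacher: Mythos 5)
Your proof is correct and follows essentially the same route as the paper's: both reduce the claim to a uniform-in-time bound on the quadratic term $\int_0^\infty\int_0^\infty \kk_n(x,y)\ff_n(\tau,x)\ff_n(\tau,y)\,dy\,dx$ via the moment estimates \eqref{tmassconservation:eq} and \eqref{UBZM:eq}. The only (cosmetic) difference is that the paper estimates $\|\partial_t\ff_n(t)\|_{0}$ directly from the strong form \eqref{tmain:eq}, bounding gain and loss terms separately with the crude inequality $\kk(x,y)\le 2\kappa(1+x)(1+y)$, whereas you work through the weak form \eqref{twf:eq} and the $\Upsilon_\tf$ bound \eqref{upsilon_bound:eq}, obtaining the marginally sharper constant $\Pi_{10}(T)=\kappa(\gamma+2)\Pi_1(T)^{2-\sigma}\rho^\sigma$ via H\"older's inequality.
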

		
		\begin{proof}
			We begin with the following bound on collision kernel, which is obtained from \eqref{kernel:eq}
			\begin{align}
				\kk(x,y)\le2\kappa(1+x)(1+y), \qquad (x,y)\in\mathbb{R}_+^2. \label{kernel other form:eq}
			\end{align}
		Let $t \in [0,T]$. Using \eqref{nop_assump:eq}, \eqref{tmassconservation:eq}, \eqref{UBZM:eq}, and \eqref{kernel other form:eq}, we get
			\begin{align*}
				\frac{1}{2} \int_0^\infty  \int_x^\infty &\int_0^y \bb(x,y-z,z)  \kk_n(y-z,z) \ff_n(t,y-z) \ff_n(t,z)\ dzdydx \\
				& = \frac{1}{2} \int_0^\infty \int_0^\infty \int_0^{x+y} \bb(z,x,y)  \kk_n(x,y) \ff_n(t,x) \ff_n(t,y)\ dzdydx \\
				&  \le \frac{ \gamma}{2} \int_0^\infty \int_0^\infty \kk(x,y) \ff_n(t,x) \ff_n(t,y)\ dydx \\
				& \le  \kappa \gamma \int_0^\infty \int_0^\infty (1+x)(1+y) \ff_n(t,x) \ff_n(t,y)\ dydx \\
				&  \le  \kappa \gamma \left(\Pi_1(T)+\rho\right)^2\,.
			\end{align*}
			Next, using \eqref{tmassconservation:eq}, \eqref{UBZM:eq}, and \eqref{kernel other form:eq}, we obtain
			\begin{align*}
				\int_0^\infty \int_0^\infty \kk_n(x,y) \ff_n(t,x) \ff_n(t,y)\ dydx & \le \int_0^\infty \int_0^\infty \kk(x,y) \ff_n(t,x) \ff_n(t,y)\ dydx \\
				& \le 2\kappa (\mm_0(\ff_n(t))+\rho)^2\,.
			\end{align*}
			Combining the obtained estimates and applying \eqref{twf:eq} gives
			\begin{align*}
				\|\partial_t \ff_n(t)\|_{0} &\le \kappa (\gamma+2) (\Pi_1(T)+\rho)^2\,,
			\end{align*}
		which directly implies Lemma~\ref{lem:equicontinuity}.
		\end{proof}
Finally, we provide the proof of \Cref{thm:existence E=0} using the estimates obtained so far.
	\begin{proof}[Proof of \Cref{thm:existence E=0}]
For a fixed $T\in(0,T_{\gamma,\sigma})$, we define
	\begin{equation*}
		\mathcal{A}(T):=\{\ff_n(t) : t\in [0,T], n\geq 1 \}.
	\end{equation*}
{	Consider $\delta>0$, and a measurable subset $E$ of $\mathbb{R}_+$ with finite Lebesgue measure. Then, from \Cref{lem:ui}, the modulus of uniform integrability }
		\begin{equation*}
			P(\delta) := \sup\left\{ \int_E  \ff_n(t, x)\ dx\ :\ E\subset \mathbb{R}_+\,, \ |E|\le \delta\,, \ t\in [0,T]\,, \ n\ge 1 \right\},
		\end{equation*} 
of $\mathcal{A}(T)$ in $\Xi_{0}$ satisfies
	\begin{equation}
		P(\delta) \le \Pi_9(T)\left[P^{\mathrm{in}}(\delta)+ \eta(\delta)\right]. \label{compactness 1:eq}
	\end{equation}
	Note that the integrability property of $\ff^{\mathrm{in}}$ ensure that
				\begin{equation}
						\lim_{\delta\to 0} P^{\mathrm{in}}(\delta) = 0 \,.  \label{uniform int limit delta zero:eq}
					\end{equation}
	Owing to \eqref{ui_1_assump:eq} and \eqref{uniform int limit delta zero:eq}, we may let $\delta\to 0$ in the \eqref{compactness 1:eq} to obtain
	\begin{equation}
		\lim_{\delta\to 0} P(\delta) = 0\,. \label{compactness 3:eq}
	\end{equation}
{Moreover, using \eqref{tmassconservation:eq}, we obtain
	\begin{equation*}
		\int_{J}^{\infty}\ff_n(t,x)dx \leq \frac{\rho}{J},
	\end{equation*}
for all $n \geq 1$, $t \in [0,T]$, and $J > 1$, which leads to
	\begin{equation}
		\lim_{J \to \infty} \sup_{n,t} 	\int_{J}^{\infty}\ff_n(t,x)dx=0. \label{ch 5 tail}
	\end{equation}}
Therefore, \eqref{compactness 3:eq} and \eqref{ch 5 tail} imply the sequential weak compactness of the set $ \mathcal{A}(T)$ in $ \Xi_0 $. Moreover, the set $ \mathcal{A} $ is bounded in $ \Xi_0 $ due to \Cref{lem:UBOZM}. Hence, we can deduce that $ \mathcal{A} $ is relatively sequentially weakly compact in $ \Xi_{0} $, in accordance with the Dunford-Pettis theorem \cite[Theorem 2.3]{phl2014}. By utilizing a version of the Arzelà-Ascoli theorem \cite[Theorem 1.3.2]{vi1995}, and based on \Cref{lem:equicontinuity}, we can conclude that the sequence $ (\ff_n)_{n\geq1} $ is relatively sequentially compact in $ \mathcal{C}([0,T],\Xi_{0,w}) $. Since $ T $ can be chosen arbitrarily within $ (0,T_{\gamma,\sigma}) $, we can apply a diagonal argument to extract a subsequence from $ (\ff_n)_{n\geq 1} $ (without relabeling) and identify a function $ \ff \in \mathcal{C}([0,T_{\gamma,\sigma}),\Xi_{0,w}) $ such that
	\begin{equation}
		\lim_{n \to \infty} \sup_{t\in[0,T]} \Big \lvert \int_{0}^{\infty} (\ff_n-\ff)(t,x)\tf(x)dx  \Big \rvert=0 \label{eq:convergence}
	\end{equation}
	for every $ \tf\in L^\infty(\mathbb{R}_+) $ and $ T\in (0,T_{\gamma,\sigma}) $. One immediate consequence of \eqref{eq:convergence} and  $ \ff_n \ge0$ for each $n\ge1$ is that $ \ff\ge0$ in $ \Xi_0 $ for all $ t > 0 $. Next, we extend \eqref{eq:convergence} to the weak topology of $ \Xi_{1} $, as follows
\begin{equation}
	\lim_{n \to \infty} \sup_{t\in[0,T]} \Big \lvert \int_{0}^{\infty} x(\ff_n-\ff)(t,x)\tf(x)dx  \Big \rvert=0  \label{eq:X1mm+1convergence}
\end{equation}
for all $ T\in (0,T_{\gamma,\sigma}) $ and $ \tf  \in L^\infty(\mathbb{R}_+) $. For this, let $ T\in (0,T_{\gamma,\sigma}) $. By employing the nonnegativity of $ \psi $ along with the bounds from \eqref{phi bound:eq} and \eqref{eq:convergence}, we obtain
\begin{align*}
	\int_0^J \psi(x) \ff(t,x) dx = \lim_{n\to \infty} \int_0^J \psi(x)\ff_n(t,x)dx \le \Pi_8(T), \quad \text{for all } t \in [0, T] \text{ and } J > 1,
\end{align*}
which implies that
\begin{align}
	\mm_\psi(\ff(t)) \le \liminf_{J\to\infty} \int_0^J \psi(x)\ff_n(t,x)dx \le \Pi_8(T), \quad \text{for all } t \in [0, T] \text{ and } J > 1. \label{ls:eq}
\end{align}

 Next, we consider $\tf  \in L^\infty(\mathbb{R}_+)$, and $J>1$. Then, from \eqref{monotonicity:eq}, \eqref{phi bound:eq} and \eqref{ls:eq}, we obtain for $t\in[0,T]$
	\begin{align*}
			\left| \int_0^\infty x (\ff_n-\ff)(t,x) \tf(x)\ dx \right| \le 	&\left| \int_0^J x (\ff_n-\ff)(t,x) \tf(x)\ dx \right| \\
			&+\|\tf\|_{L^\infty(\mathbb{R}_+)}\int_J^\infty x (\ff_n+\ff)(t,x)\ dx \\
			\le 	&\left| \int_0^J x (\ff_n-\ff)(t,x) \tf(x)\ dx \right| \\
			&+\frac{J}{\psi(J)}\|\tf\|_{L^\infty(\mathbb{R}_+)}\int_J^\infty \psi(x) (\ff_n+\ff)(t,x) \ dx\\
			\le 	&\left| \int_0^J x (\ff_n-\ff)(t,x) \tf(x)\ dx \right| +\frac{2J\Pi_8(T)}{\psi(J)}\|\tf\|_{L^\infty(\mathbb{R}_+)}.
	\end{align*}
	Therefore,
	
		\begin{align*}
	\sup_{t\in[0,T]}	\left| \int_0^\infty x (\ff_n-\ff)(t,x) \tf(x)\ dx \right| \le & \sup_{t\in[0,T]} \left| \int_0^J x (\ff_n-\ff)(t,x) \tf(x)\ dx \right|\\ &+\frac{2J\Pi_8(T)}{\psi(J)}\|\tf\|_{L^\infty(\mathbb{R}_+)}.
	\end{align*}
	
	Owing to \eqref{eq:convergence}, we can take the limits as $ n \to \infty $ and $ J \to \infty $ in the preceding inequality to derive \eqref{eq:X1mm+1convergence}. Also observe that the convergence demonstrated in \eqref{eq:X1mm+1convergence} indicates that $\ff \in \mathcal{C}([0,T_{\gamma,\sigma}), \Xi_{1,w})$. Additionally, a direct consequence of \eqref{tin:eq} and \eqref{tmassconservation:eq} is that
	
	\begin{equation}
		\mm_1(\ff (t))=\lim_{n \to \infty} \mm_1(\ff_n(t))=\lim_{n \to \infty} \mm_1(\ff_n^{\mathrm{in}})= {\mm_1(\ff^{\mathrm{in}})}=\rho,\qquad t\in [0,T_{\gamma,\sigma}). \label{eq:limitmass}
	\end{equation}
	
It remains to be shown that the constructed solution is a weak solution to \eqref{main:eq}-\eqref{in:eq} in the sense of \Cref{defn:weaksolution}. To this end, by using \eqref{eq:convergence}, \eqref{eq:X1mm+1convergence}, and \eqref{eq:limitmass}, we conclude that $u \in \mathcal{C}([0,T_{\gamma,\sigma}), \Xi_{0,w}) \cap L^\infty((0,T_{\gamma,\sigma}), \Xi_1)$, thus satisfying \eqref{wf1:eq}. Additionally, $\ff$ satisfies \eqref{wf2:eq} as a result of \eqref{kernel:eq}, \eqref{kernel other form:eq}, and \eqref{wf1:eq}. Finally, we verify that $u$ fulfills \eqref{wf3:eq}.

Let $t \in (0, T_{\gamma,\sigma})$ and $\tf \in L^\infty(\mathbb{R}_+)$. Starting from \eqref{t_initialdata:eq} and \eqref{eq:convergence}, we obtain
\begin{equation*}
	\lim_{n \to \infty} \int_{0}^{\infty} \tf(x)(\ff_n(t,x) - \ff_n^{\mathrm{in}}(x)) \, dx = \int_{0}^{\infty} \tf(x)(\ff(t,x) - \ff^{\mathrm{in}}(x)) \, dx.
\end{equation*}

Next, we observe that
\begin{align*}
	\lim_{n \to \infty} \frac{\Upsilon_{\tf}(x,y) \kk_n(x,y)}{(1+x)(1+y)} = \frac{\Upsilon_{\tf}(x,y) \kk(x,y)}{(1+x)(1+y)}, 
\end{align*}
for $(x,y) \in \mathbb{R}_+^2$. Furthermore, by \eqref{upsilon_bound:eq} and \eqref{kernel other form:eq}, we have
\begin{align*}
	\left\lvert \frac{\Upsilon_{\tf}(x,y) \kk_n(x,y)}{(1+x)(1+y)} \right\rvert \leq 2\kappa (\gamma + 2) \lVert \tf \rVert_{L^\infty}.
\end{align*}

Finally, it follows from \eqref{eq:convergence} and \eqref{eq:X1mm+1convergence} that 

\begin{equation*}
	[(\tau,x,y) \mapsto \ff_n(\tau,x)\ff_n(\tau,y)] \rightharpoonup  [(\tau,x,y) \mapsto  \ff(\tau,x)\ff(\tau,y)]
\end{equation*}
in $L^1((0, t) \times \mathbb{R}_+^2,(1+x)(1+y) \, dy \, dx \, d\tau)$. Therefore, it follows from \cite[Proposition~2.61]{FL2007} that
	\begin{align*}
		\lim_{n\to \infty} \frac{1}{2} \int_{0}^{t} \int_{0}^{\infty} \int_{0}^{\infty} & \Upsilon_{\tf}(x,y) \kk_n(x,y) \ff_n(\tau,x) \ff_n(\tau,y) dy dx d\tau \\
		&=\frac{1}{2} \int_{0}^{t} \int_{0}^{\infty} \int_{0}^{\infty} \Upsilon_{\tf}(x,y) \kk(x,y) \ff (\tau,x) \ff (\tau,y) dy dx d\tau.
	\end{align*}
	
We have therefore confirmed that $\ff$ fulfills all the criteria necessary to be considered a weak solution to~\eqref{main:eq}-\eqref{in:eq} on $[0, T_{\gamma,\sigma})$. Additionally, it follows from \eqref{eq:limitmass} that the constructed weak solution on $[0, T_{\gamma,\sigma})$ is mass-conserving. It remains to establish the moment estimates stated in \Cref{thm:existence E=0} (a) and (b), which can be derived from \Cref{lem:supermoments 1} and \eqref{eq:convergence}. This completes the proof of \Cref{thm:existence E=0}.
\end{proof}
\subsection{Existence:~$\alpha\in(0,1)$}
{In this section}, we plan to show another result on existence of weak solutions to \eqref{main:eq}-\eqref{in:eq} stated in \Cref{thm:existence}. For this, we additionally assume that {$\alpha\in(0,1)$ and} the breakage kernel $ \bb $ satisfies \eqref{neg_nop_assump:eq}. {In addition to \eqref{in_0 and 1:eq}, we assume that the initial condition satisfies}
\begin{align}
	\int_0^\infty x^{-\alpha} \ff^{\mathrm{in}}(x) \, dx < \infty, \label{in_ negative:eq}
\end{align}

where $\alpha$ {is defined} in \eqref{ui_2_assump:eq}. To establish the bound on the superlinear moment, as given in \Cref{lem:supermoments 1}, it is essential to obtain a lower bound on the moment of order $ \sigma_1 $. Without the assumption \eqref{AOM<1b:eq}, however, \Cref{lem: uniform lower bound zero moment} no longer holds. Using \eqref{neg_nop_assump:eq} and \eqref{in_ negative:eq}, we derive a lower bound on the moment of order $ \sigma_1 $ {as in \cite[Lemma 3.3]{ali2024}}, which we present in \Cref{lem:LB on alpha} given below.

\begin{lem}\label{lem:LB on alpha}
	Let $ T \in (0, T_{\gamma, \sigma}) $, where $ T_{\gamma, \sigma} $ is defined in \eqref{tstar:eq}. Then there exists $ \Pi_i(T) > 0 $ for $ i \in \{11, 12\} $ such that
	\begin{enumerate}
		\item[(a)] For all $ t \ge 0 $ and $ {n \ge n_*}$, we have
		\begin{align}
			\mm_0(\ff_n(t)) \ge \frac{ \mm_0(\ff^{\mathrm{in}})}{2}. \label{LBOZM:eq}
		\end{align}
		\item[(b)] For all $ t \in [0, T] $ and $ n \ge 1 $, we have
		\begin{align}
			\mm_{-\alpha}(\ff_n(t)) \le \Pi_{11}(T). \label{UBONM:eq}
		\end{align}
		\item[(c)] For all $ t \in [0, T] $ and ${ n \ge n_*} $, we have
		\begin{align}
			\mm_{\sigma_1}(\ff_n(t)) \ge \Pi_{12}(T). \label{a nonuniform lower bound on lambda_1 moment:eq}
		\end{align}
	\end{enumerate}
\end{lem}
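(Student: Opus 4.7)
The plan is to prove the three estimates (a), (b), (c) in order, with each step relying on its predecessors.

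For (a), I specialize \eqref{twf:eq} to the test function $\tf \equiv 1$, obtaining
\begin{equation*}
\frac{d}{dt}\mm_0(\ff_n(t)) = \frac{1}{2}\int_0^\infty\!\!\int_0^\infty [N(x,y)-2]\,\kk_n(x,y)\,\ff_n(t,x)\ff_n(t,y)\,dy\,dx \ge 0,
\end{equation*}
thanks to the lower bound $N(x,y)\ge 2$ of \eqref{nop_assump:eq}. Thus $t\mapsto \mm_0(\ff_n(t))$ is non-decreasing, so $\mm_0(\ff_n(t)) \ge \mm_0(\ff_n^{\mathrm{in}})$. Choosing $n_0$ large enough that $\int_{n_0}^\infty \ff^{\mathrm{in}}(x)\,dx \le \mm_0(\ff^{\mathrm{in}})/2$, which is permissible since $\ff^{\mathrm{in}}\in \Xi_0$, delivers \eqref{LBOZM:eq} for $n\ge n_0$.

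For (b), I would formally test \eqref{twf:eq} with $\tf(x)=x^{-\alpha}$. Since this is not in $L^\infty(\mathbb{R}_+)$, I first use the truncation $\tf_M(x):=\min\{x^{-\alpha},M\}$, derive the corresponding identity, and pass $M\to\infty$ by monotone convergence; the limit is finite because \eqref{thm negative in:eq} yields $\mm_{-\alpha}(\ff_n^{\mathrm{in}})<\infty$. By \eqref{neg_nop_assump:eq},
\begin{equation*}
\Upsilon_{\tf}(x,y) \le \left(\tfrac{L_{-\alpha}}{2}-1\right)(x^{-\alpha}+y^{-\alpha}),
\end{equation*}
so combining with \eqref{kernel:eq} gives
\begin{equation*}
\frac{d}{dt}\mm_{-\alpha}(\ff_n(t)) \le \Pi\bigl[\mm_{\sigma_1-\alpha}(\ff_n(t))\,\mm_{\sigma_2}(\ff_n(t)) + \mm_{\sigma_2-\alpha}(\ff_n(t))\,\mm_{\sigma_1}(\ff_n(t))\bigr]
\end{equation*}
for some $\Pi>0$. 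Using \Cref{lem:UBOZM} and \eqref{tmassconservation:eq}, H\"older interpolation bounds $\mm_{\sigma_i}(\ff_n(t))\le \Pi_1(T)^{1-\sigma_i}\rho^{\sigma_i}$, and interpolating between $\mm_{-\alpha}$ and $\mm_1\le \rho$ yields
\begin{equation*}
\mm_{\sigma_i-\alpha}(\ff_n(t)) \le \mm_{-\alpha}(\ff_n(t))^{(1-\sigma_i+\alpha)/(1+\alpha)}\,\rho^{\sigma_i/(1+\alpha)}.
\end{equation*}
Since these exponents lie in $(0,1]$, Young's inequality reduces the differential inequality to $X'\le C(T)(1+X)$, and Gronwall's lemma produces the uniform bound \eqref{UBONM:eq}.

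For (c), I apply the Lyapunov moment-interpolation inequality: when $\sigma_1>0$,
\begin{equation*}
\mm_0(\ff_n(t)) \le \mm_{-\alpha}(\ff_n(t))^{\sigma_1/(\sigma_1+\alpha)}\,\mm_{\sigma_1}(\ff_n(t))^{\alpha/(\sigma_1+\alpha)}.
\end{equation*}
Rearranging and substituting the bounds from (a) and (b) yields
\begin{equation*}
\mm_{\sigma_1}(\ff_n(t)) \ge \frac{\mm_0(\ff_n(t))^{(\sigma_1+\alpha)/\alpha}}{\mm_{-\alpha}(\ff_n(t))^{\sigma_1/\alpha}} \ge \frac{(\mm_0(\ff^{\mathrm{in}})/2)^{(\sigma_1+\alpha)/\alpha}}{\Pi_{12}(T)^{\sigma_1/\alpha}},
\end{equation*}
which defines $\Pi_{13}(T)$. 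When $\sigma_1=0$, statement (c) collapses to (a) and no interpolation is needed.

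The delicate point is (b): besides the routine justification of the singular test function $x^{-\alpha}$, I must accommodate the boundary case $\sigma_1=0$, where the interpolation exponent $(1-\sigma_1+\alpha)/(1+\alpha)$ degenerates to $1$ and the differential inequality becomes genuinely linear in $\mm_{-\alpha}$. Gronwall then only yields a $T$-dependent exponential bound, which is why $\Pi_{12}(T)$ cannot be made uniform in $T$, and consequently $\Pi_{13}(T)$ deteriorates as $T \uparrow T_{\gamma,\sigma}$.
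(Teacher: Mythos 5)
Your proof follows essentially the same route as the paper's: test with $\tf\equiv 1$ and use $N\ge 2$ for (a); test with $x^{-\alpha}$, estimate $\Upsilon$ via \eqref{neg_nop_assump:eq}, interpolate the moments $\mm_{\sigma_i-\alpha}$ against $\mm_{-\alpha}$ and $\mm_1$ and the moments $\mm_{\sigma_i}$ against $\mm_0$ and $\mm_1$, then Young plus Gronwall for (b); H\"older between orders $-\alpha$ and $\sigma_1$ for (c). The constants and exponents you obtain agree with the paper's.

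Two small points where you are actually more careful than the paper. First, you correctly flag that $\tf(x)=x^{-\alpha}$ is not in $L^\infty(\mathbb{R}_+)$ and so \eqref{twf:eq} does not apply verbatim; your truncation $\tf_M=\min\{x^{-\alpha},M\}$ followed by monotone convergence repairs this, whereas the paper applies \eqref{twf:eq} to $x^{-\alpha}$ directly without comment. Second, the paper's proof of (c) begins ``Since $-\alpha<0<\sigma_1$'', tacitly assuming $\sigma_1>0$ even though \Cref{thm:existence} allows $\sigma_1=0$; you dispose of the degenerate case by noting that (c) then reduces to (a). Both observations are legitimate and do not change the argument, they simply tighten it. Your sharpened bound $\Upsilon_\tf\le(L_{-\alpha}/2-1)(x^{-\alpha}+y^{-\alpha})$ versus the paper's cruder drop of the negative terms is immaterial to the final estimate.
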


\begin{proof}
	\begin{enumerate}
		\item [(a)]  First, choose {$n_*\ge 1$} large enough such that
		\begin{align}
			\int_{{n_*}}^\infty \ff^{\mathrm{in}}(x)dx\le \frac{ \mm_0(\ff^{\mathrm{in}})}{2}. \label{LBOZM 1:eq}
		\end{align}
		Next, let  $t \ge 0$ and $\tf( x)=1$ for $ x\in\mathbb{R}_+$. Then, with the help of  \eqref{nop_assump:eq}, \eqref{kernel:eq}, \eqref{tkernel:eq}, and \eqref{twf:eq}, we get
		\begin{align*}
			\frac{d}{dt} \mm_0(\ff_n(t)) = \frac{1}{2} \int_0^\infty \int_0^\infty \left[ N(x,y)-2 \right] \kk_n(x,y) \ff_n(t,x) \ff_n(t,y)\ dydx\ge0.
		\end{align*}
		Integrating with respect to time and using \eqref{LBOZM 1:eq}, we get, for $t\ge 0$
		\begin{align*}
			\mm_0(\ff_n(t))\ge 	\mm_0(\ff^{\mathrm{in}}_n)\ge \frac{ \mm_0(\ff^{\mathrm{in}})}{2}.
		\end{align*}

		\item [(b)]Let $t\in [0,T]$ and $\tf( x)= x^{-\alpha}$ for $ x\in\mathbb{R}_+$. Then, from \eqref{kernel:eq}, \eqref{neg_nop_assump:eq} and \eqref{tkernel:eq}, we get
		\begin{align}
			\frac{d}{dt}\int_0^\infty  x^{-\alpha} &\ff_n(t,x)d x \le \frac{L_{-\alpha}}{2}\int_0^\infty\int_0^\infty( x^{-\alpha}+ y^{-\alpha})\kk( x, y)\ff_n(t,x)\ff_n(t,y)d x d y\nonumber\\\
			&\le L_{-\alpha}\int_0^\infty\int_0^\infty  x^{-\alpha}\kk( x, y)\ff_n(t,x)\ff_n(t,y)d x d y \nonumber\\
			&\le \kappa L_{-\alpha}[\mm_{{{\sigma_1}}-\alpha}(\ff_n(t))\mm_{{\sigma_2}}(\ff_n(t))+\mm_{{{\sigma_2}}-\alpha}(\ff_n(t))\mm_{{\sigma_1}}(\ff_n(t))]. \label{neg ME:eq}
		\end{align}
		
	Since $0\le \sigma_1\le \sigma_2\le 1$ and $-\alpha\le \sigma_1-\alpha\le \sigma_2-\alpha\le 1$, then by  \eqref{tmassconservation:eq}, \eqref{UBZM:eq} and H\"older's inequality, we obtain the following bounds
		\begin{align*}
			\mm_{{\sigma_1}}(\ff_n(t))&\le \mm_0(\ff_n(t))^{1-\sigma_1}\mm_1(\ff_n(t))^{\sigma_1} \le \Pi_1(T)^{1-{{\sigma_1}}}\rho^{{\sigma_1}}, \\
			\mm_{{\sigma_2}}(\ff_n(t))&\le \mm_0(\ff_n(t))^{1-\sigma_2}\mm_1(\ff_n(t))^{\sigma_2}\le \Pi_1(T)^{1-{{\sigma_2}}}\rho^{{\sigma_2}}, \\
			\mm_{{{\sigma_1}}-\alpha}(\ff_n(t))&\le \mm_{-\alpha}(\ff_n(t))^{\frac{1+\alpha-{\sigma_1}}{1+\alpha}}\mm_1(\ff_n(t))^{\frac{{\sigma_1}}{1+\alpha}}\le \rho^{\frac{{\sigma_1}}{1+\alpha}} \mm_{-\alpha}(\ff_n(t))^{\frac{1+\alpha-{\sigma_1}}{1+\alpha}},\\
			\mm_{{{\sigma_2}}-\alpha}(\ff_n(t))&\le \mm_{-\alpha}(\ff_n(t))^{\frac{1+\alpha-{\sigma_2}}{1+\alpha}}\mm_1(\ff_n(t))^{\frac{{\sigma_2}}{1+\alpha}} \le \rho^{\frac{{\sigma_2}}{1+\alpha}} \mm_{-\alpha}(\ff_n(t))^{\frac{1+\alpha-{\sigma_2}}{1+\alpha}}.
		\end{align*}
		Using above estimate in \eqref{neg ME:eq}, we get
		\begin{align*}
			\frac{d\mm_{-\alpha}}{dt} (\ff_n(t))&\le c[\mm_{-\alpha}(\ff_n(t))^{\frac{1+\alpha-{\sigma_1}}{1+\alpha}}+\mm_{-\alpha}(\ff_n(t))^{\frac{1+\alpha-{\sigma_2}}{1+\alpha}}],
		\end{align*}
		where $$c:=\kappa L_{-\alpha}\max\left \{\Pi_1(T)^{1-{{\sigma_2}}}\rho^{{{\sigma_2}}+\frac{{\sigma_1}}{1+\alpha}},\Pi_1(T)^{1-{{\sigma_1}}}\rho^{{{\sigma_1}}+\frac{{\sigma_2}}{1+\alpha}}\right \}.$$
		Consequently, by Young's inequality,
		\begin{align*}
			\frac{d\mm_{-\alpha}}{dt} (\ff_n(t))&\le \frac{2c}{1+\alpha}[1+\mm_{-\alpha}(\ff_n(t))],
		\end{align*}
	from this, it follows that
		\begin{equation*}
			\mm_{-\alpha}(\ff_n(t))\le \bigg[ 1 + \mm_{-\alpha}(\ff_n^{\mathrm{in}}) \bigg] e^{\frac{2ct}{1+\alpha} } \le \bigg[ 1 + \mm_{-\alpha}(\ff^{\mathrm{in}}) \bigg] e^{\frac{2ct}{1+\alpha} }
		\end{equation*}
		for $t\ge 0$. Introducing 
		\begin{equation*}
			\Pi_{11}(T) := \bigg[ 1 + \mm_{-\alpha}(\ff^{\mathrm{in}}) \bigg] e^{\frac{2cT}{1+\alpha} },
		\end{equation*}
		which provides the estimate outlined in \Cref{lem:LB on alpha}(b).
		
		\item [(c)]	Since $-\alpha<0<{\sigma_1}$, by using \Cref{lem:LB on alpha}(a) and (c) and H\"older's inequality, we get
		\begin{align*}
			\frac{ \mm_0(\ff^{\mathrm{in}})}{2} \le \mm_{0}(\ff_n(t)) \le 	\mm_{-\alpha}(\ff_n(t))^{\frac{{\sigma_1}}{\alpha+{\sigma_1}}}  \mm_{{\sigma_1}}(\ff_n(t))^{\frac{\alpha}{\alpha+{\sigma_1}}}\le \Pi_{11}(T)^{\frac{{\sigma_1}}{\alpha+{\sigma_1}}}  \mm_{{\sigma_1}}(\ff_n(t))^{\frac{\alpha}{\alpha+{\sigma_1}}}.
		\end{align*}
		Hence, 
		\begin{align*}
			\mm_{{\sigma_1}}(\ff_n(t))\ge  \Pi_{11}(T)^{\frac{-{\sigma_1}}{\alpha}} 	\left(\frac{ \mm_0(\ff^{\mathrm{in}})}{2}\right)^{\frac{\alpha+{\sigma_1}}{\alpha}},
		\end{align*}
		which completes the proof of the \Cref{lem:LB on alpha}(c).
	\end{enumerate}
\end{proof}

\begin{proof}[Proof of \Cref{thm:existence}]
	A detailed analysis of the proofs for \Cref{lem:supermoments 1}, \Cref{lem:refine supermoments}, {\Cref{lem:ui}} and \Cref{lem:equicontinuity} indicates that these results remain applicable. The key adjustments involve replacing each reference to \Cref{lem: uniform lower bound zero moment} with \Cref{lem:LB on alpha}. As a result, the remainder of the proof for \Cref{thm:existence} follows the same trajectory as that of \Cref{thm:existence E=0}, incorporating these changes seamlessly.
\end{proof}

%
\medskip

\textbf{Acknowledgments.} 
The authors wish to express their gratitude to the Council of Scientific \& Industrial Research (CSIR), India, for granting a PhD fellowship to RGJ under Grant 09/143(0996)/2019-EMR-I. {We sincerely thank the referees for their valuable comments and suggestions.}


	\bibliography{Refs.bib}
	\bibliographystyle{abbrv}
\end{document}